\documentclass[11pt,bezier]{article}
\setcounter{page}{1}
\usepackage{amsmath,amssymb,amsfonts,euscript,graphicx}

\textwidth = 15 cm \textheight = 20 cm \oddsidemargin =0.7 cm
\evensidemargin = -3 cm \topmargin = 1 cm
\parskip = 2 mm
\newtheorem{preproof}{{\bf \indent Proof.}}

\newenvironment{proof}[1]{\begin{preproof}{\rm
               #1}\hfill{$\Box$}}{\end{preproof}}


\newtheorem{cor}{\bf\indent Corollary}[section]
\newtheorem{example}{\bf\indent Example}[section]
\newtheorem{thm}{{\bf\indent Theorem}}[section]

\newtheorem{remark}{\sc\indent Remark}[section]
\newtheorem{lem}{\bf\indent Lemma}[section]

\title{\bf \large Coloring in essential annihilating-ideal graphs\\ of commutative rings\thanks
{{\it Key Words}: Essential annihilating-ideal graph; Vertex chromatic number; Edge chromatic number; Bipartite graph.\newline
{\indent{~~2010 {\it Mathematics Subject Classification}: 05C25; 05C15; 05C69.}}}}

\author{{\normalsize   {\sc R. Nikandish${}^{\mathsf{a}}$\thanks{Corresponding author}, {\sc M. Mehrara${}^{\mathsf{b}}$ and {\sc M. J. Nikmehr${}^{\mathsf{b}}$}}  }
}\vspace{3mm}\\
{\footnotesize{${}^{\mathsf{a}}$\it Department of Mathematics, Jundi-Shapur University of Technology,}}\\
{\footnotesize{\rm P.O. BOX \rm{64615-334},
Dezful, Iran}}\\
{\footnotesize{${}^{\mathsf{b}}$\it Faculty of Mathematics, K.N. Toosi
University of Technology, }}\\
{\footnotesize{\rm P.O. BOX \rm{16315-1618}, Tehran, Iran}}\\
{\footnotesize{ $\mathsf{r.nikandish@ipm.ir}$}}\quad\quad
{\footnotesize{$\mathsf{nikmehr@kntu.ac.ir}$}}\quad\quad
{\footnotesize{ $\mathsf{mhrmahnoush@gmail.com}$}}\\
{\footnotesize{$\mathsf{}$ }}}
\date{}

\begin{document}

\maketitle


\begin{abstract}
{\small The essential annihilating-ideal graph $\mathcal{EG}(R)$ of a commutative unital ring $R$ is a simple graph whose vertices are non-zero ideals of $R$ with non-zero annihilator and there exists an edge between two distinct vertices $I,J$ if and only if $Ann(IJ)$ has a non-zero intersection with any non-zero ideal of $R$. In this paper, we show that $\mathcal{EG}(R)$ is weakly perfect, if $R$ is Noetherian and an explicit formula for the clique number of $\mathcal{EG}(R)$ is given.  Moreover, the structures of  all rings whose essential annihilating-ideal graphs have chromatic number $2$ are fully determined. Among other results, twin-free clique number and edge chromatic number of $\mathcal{EG}(R)$ are examined.
}
\end{abstract}
\begin{center}\section{Introduction}\end{center}

 Computing the vertex and edge chromatic number in a graph are examples of NP-complete problems in discrete structures which have many applications not only in real life but also in many branches of computer science. Although graph coloring is an old topic, it is still one of the most active areas in graph theory; for the most recent study in graph coloring see for instance \cite{cib}, \cite{jac}, \cite{nor},  \cite{st} and \cite{tardif}. In addition to wide range of applications, the complexity of computations  has caused considerable interest in characterizing these invariants for graphs  associated with algebraic structures, some
examples in this direction may be found in  \cite{bak}, \cite{dal}, \cite{ma} and \cite{nikan}. This paper is in this field and aims to investigate the  the vertex and edge coloring  in  essential annihilating-ideal graphs of commutative rings.

Throughout this paper, all rings $R$ are  commutative with identity. The sets of all maximal ideals, minimal prime ideals,  ideals with non-zero annihilator and  nilradical  of a ring $R$ are denoted by ${\rm Max}(R)$, ${\rm Min}(R)$,  $A(R)$ and  ${\rm Nil}(R)$, respectively. Also, by $Z(R)$, we mean the set of zero-divisors in $R$. If $I$ is an ideal of $R$, then we write $I\leq R$.
  If $B$ is a subset of $R$, then by $B^*$, we mean $B\setminus\{0\}$. A ring $R$ is said to be  \textit{reduced} if $0_R$ is the only nilpotent element of $R$.   A non-zero ideal $I$ of $R$ is called \textit{essential}, denoted by $I\leq_e R$, if $I$ has a non-zero intersection with any non-zero ideal of $R$. The \textit{socle} of
an $R$-module $M$, denoted by $Soc(M)$, is the sum of all simple submodules of $M$. If
there are no simple submodules, this sum is defined to be zero. It is well-known $\mathrm{Soc}(M)$ is the intersection of all essential submodules (see \cite[21.1]{wis}). For undefined notation or terminology from ring theory we refer the reader to \cite{ati}.

Let $G=(V,E)$ be a graph, where $V=V(G)$ is the set of vertices and $E=E(G)$ is the set of edges.   For a vertex $x \in V(G)$, the degree, open and closed neighborhood of $x$ are denoted by $deg(x)$, $N(x)$ and $N[x]$, respectively. The maximum degree of vertices of $G$ is denoted by $\Delta(G)$. The graph $H=(V_0,E_0)$ is a \textit{subgraph of} $G$ if $V_0\subseteq V$ and $E_0 \subseteq E$. Moreover, $H$ is called an \textit{induced subgraph by} $V_0$,  denoted by $G[V_0]$, if $V_0\subseteq V$ and $E_0=\{\{u,v\}\in E\, |\,u,v\in V_0\}$.   For two vertices $u$ and $v$ in $G$, the notation $u-v$ means that $u$ and $v$ are adjacent.  A complete graph of order $n$ and a complete bipartite graph with part sizes $m$ and $n$ are denoted by $K_n$, $K_{m,n}$, respectively. If the size of one of the parts is $1$ in a complete bipartite graph, then the graph is said to
be a \textit{star graph}. In a graph $G$, a set $S\subseteq V(G)$ is an \textit{independent set} if the subgraph induced by $S$ is totally disconnected.   Let $G_1$ and $G_2$ be two disjoint graphs. The \textit{join} of $G_1$ and $G_2$, denoted by $G_1\vee G_2$, is a graph with the vertex set $V(G_1\vee G_2)=V(G_1)\cup V(G_2)$ and edge set $E(G_1\vee G_2)=E(G_1)\cup E(G_2)\cup \{uv\,|\, u\in V(G_1), v\in V(G_2) \}$. A \textit{clique} of
$G$ is a maximal complete subgraph of $G$ and the number of
vertices in the largest clique of $G$, denoted by $\omega(G)$, is
called the \textit{clique number} of $G$. For  a graph $G$, let
$\chi(G)$ denote the \textit{vertex chromatic number} of $G$, i.e., the
minimal number of colors which can be assigned to the vertices of
$G$ in such a way that every two adjacent vertices have different
colors. Clearly, for every graph $G$, $\omega(G)\leq \chi(G)$. A graph $G$ is said to be \textit{weakly perfect} if $\omega(G)=\chi(G)$. Two distinct vertices $x,y$ are called \textit{true twins} if   $N[x]=N[y]$. The set $X\subseteq V(G)$ is called a \textit{twin-free clique in} $G$ if the subgraph induced by $X$
is a clique and for every $u,v \in X$ it follows that $N[u]\neq N[v]$, that is, the subgraph induced by $X$ is a clique and it contains no true twins. \textit{The twin-free clique number of} $G$, denoted by $\overline{\omega}(G)$, is the maximum cardinality among all twin-free cliques in $G$. Recall that
a \textit{$k$-edge coloring} of a graph $G$ is an assignment of $k$ colors $\{1,\ldots,k\}$ to the edges of $G$ such that no two adjacent edges have the same color, and
 the \textit{edge chromatic number} $\chi'(G)$ of a graph $G$ is the smallest integer $k$ such that $G$ has a $k$-edge coloring. A graph $G$ is called \textit{Class} 1, if $\chi'(G)=\Delta(G)$ and
it is of \textit{class} 2 if $\chi'(G)=\Delta(G)+1$ A graph $G$ is called \textit{overfull} if $|E(G)|>\lfloor\frac{|V(G)|}{2}\rfloor\Delta(G)$. For any undefined notation or terminology in graph theory, we refer the reader to \cite {west}.

\textit{The essential annihilating-ideal graph} of a ring $R$ is defined as the graph   $\mathcal{EG}(R)$ and two distinct vertices $I,J$ are joined by an edge if and only if $Ann(IJ)\leq_e R$. This graph was first introduced and studied by Nazim and Rehman in \cite{asl} as the ideal version of essential graph (see \cite{nikm}). The authors proved many interesting results for $\mathcal{EG}(R)$. For instance, they proved $\mathcal{EG}(R)$ is
always connected of diameter at most three and girth at most four (if it has a
cycle). Furthermore, they classified rings $R$ whose $\mathcal{EG}(R)$ is  star or complete.  They also investigated rings $R$ for which $\mathcal{EG}(R)$ is tree, unicycle, split, outerplanar and planar. This paper aims to discuss the coloring of $\mathcal{EG}(R)$. We show that $\mathcal{EG}(R)$ is weakly perfect, if $R$ is Noetherian and an explicit formula for the chromatic number of $\mathcal{EG}(R)$ is given.  Moreover, bipartite essential annihilating-ideal graph are characterized. Finally, twin-free clique number and edge chromatic number of $\mathcal{EG}(R)$ are computed.


{\begin{center}{\section{Main results}}\end{center}}\vspace{-2mm}

First of all we are going to compute the clique number of  $\mathcal{EG}(R)$, when $R$ is Noetherian. For this aim, we need the following lemmas.

\begin{lem}\label{lem1}
Let $I$ be an ideal of a ring $R$. If $I$ is nilpotent, then $I\subseteq {\rm Nil}(R)$. Moreover, the converse is also true if $I$ is finitely generated.
\end{lem}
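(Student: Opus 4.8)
The plan is to treat the two implications separately: the forward direction is immediate from the definition of a nilpotent ideal, while the converse is where finite generation does the work, via a short pigeonhole argument on monomials in the generators.

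For the forward direction, suppose $I$ is nilpotent, so that $I^n=(0)$ for some positive integer $n$. I would then simply observe that for every $x\in I$ one has $x^n\in I^n=(0)$, so $x$ is nilpotent and hence $x\in{\rm Nil}(R)$; this gives the inclusion $I\subseteq{\rm Nil}(R)$ with no further effort.

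For the converse, assume $I\subseteq{\rm Nil}(R)$ and that $I$ is finitely generated, say $I=(a_1,\dots,a_k)$. Since each generator lies in ${\rm Nil}(R)$, there are positive integers $n_i$ with $a_i^{n_i}=0$ for $1\le i\le k$. I would set $N=1+\sum_{i=1}^{k}(n_i-1)$ and use the fact that $I^N$ is generated, as an ideal of $R$, by the products $a_1^{e_1}\cdots a_k^{e_k}$ with $e_i\ge 0$ and $e_1+\cdots+e_k=N$. By the pigeonhole principle, any such exponent tuple must have $e_i\ge n_i$ for at least one index $i$, so every one of these generating monomials equals $0$; hence $I^N=(0)$ and $I$ is nilpotent.

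The whole argument is routine, and I do not anticipate a genuine obstacle; the only point deserving care is the choice of the exponent $N$ in the converse and the verification that every degree-$N$ monomial in the $a_i$ is forced to vanish. This is precisely where finite generation is indispensable: with infinitely many generators there is no uniform bound on the power of $I$ needed to annihilate all products, and indeed the converse can fail (as the standard $\bigoplus$-type examples show), so the hypothesis cannot simply be dropped.
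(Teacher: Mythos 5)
Your proof is correct and takes essentially the same route as the paper: the forward direction elementwise, and for the converse using finite generation to show that a sufficiently high power of $I$, generated by monomials in the nilpotent generators, vanishes. In fact your exponent $N=1+\sum_{i}(n_i-1)$ together with the explicit pigeonhole step is more careful than the paper's own argument, which takes $t\geq \max\{t_1,\ldots,t_n\}$ and asserts $I^t=(0)$ as obvious --- a claim that, read literally with $t=\max t_i$, can fail (for $I=(x,y)$ in $k[x,y]/(x^2,y^2)$ one has $xy\neq 0$ in $I^2$), so your choice of exponent is the one that genuinely closes the argument.
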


\begin{proof}
{If $I$ is nilpotent, then it is clear that $I\subseteq {\rm Nil}(R)$. To prove the converse, let $I=R(x_1,\ldots, x_n)\subseteq {\rm Nil}(R)$, for some elements $x_1,\ldots, x_n\in R$. Then  there exists a positive integer $t_i$ such that $x_i^{t_i}=0$, for every $1\leq i \leq n$. Let $t\geq max\{t_1,\ldots, t_n\}$. Then obviously, $I^t=(0)$.
}
\end{proof}
 \begin{lem}\label{lem2}
Let $I$ be an ideal of a ring $R$. If $I$ is nilpotent, then $Ann(I)\leq_e R$. The converse is also true if $R$ is Noetherian.
\end{lem}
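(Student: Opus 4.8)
The plan is to prove the two implications separately. \emph{Forward direction:} suppose $I$ is nilpotent, say $I^n=0$, and let $J$ be an arbitrary non-zero ideal of $R$; I must produce a non-zero element of $Ann(I)\cap J$. I would look at the descending chain $J\supseteq IJ\supseteq I^2J\supseteq\cdots\supseteq I^nJ=0$ and take the largest index $k$ with $I^kJ\neq 0$ (it exists since $I^0J=J\neq 0$). Then $I\cdot(I^kJ)=I^{k+1}J=0$, so $I^kJ\subseteq Ann(I)$, while also $I^kJ\subseteq J$; hence $0\neq I^kJ\subseteq Ann(I)\cap J$. Since $J$ was arbitrary, $Ann(I)\leq_e R$. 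This part is routine.

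\emph{Converse:} assume $R$ is Noetherian and $Ann(I)\leq_e R$, and suppose toward a contradiction that $I$ is not nilpotent. As $R$ is Noetherian, $I$ is finitely generated, so Lemma \ref{lem1} gives $I\not\subseteq{\rm Nil}(R)$. Because $R$ is Noetherian it has only finitely many minimal primes and their intersection is ${\rm Nil}(R)$, so there is a minimal prime $\mathfrak{p}$ with $I\not\subseteq\mathfrak{p}$. The key commutative-algebra input is that in a Noetherian ring every minimal prime is an associated prime, hence $\mathfrak{p}=Ann(x)$ for some non-zero $x\in R$.

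The contradiction then comes in two short steps. First, $Ann(I)\subseteq\mathfrak{p}$: if $a\in Ann(I)$ then $aI=0\subseteq\mathfrak{p}$, and since $I\not\subseteq\mathfrak{p}$ and $\mathfrak{p}$ is prime, $a\in\mathfrak{p}$. Second, the non-zero ideal $Rx$ meets $Ann(I)$ trivially: if $0\neq rx\in Ann(I)$, then $rxI=0$, so $I\subseteq Ann(rx)$; but $a\in Ann(rx)$ iff $ar\in Ann(x)=\mathfrak{p}$, and since $rx\neq 0$ forces $r\notin\mathfrak{p}$, primeness of $\mathfrak{p}$ yields $Ann(rx)=\mathfrak{p}$, whence $I\subseteq\mathfrak{p}$ --- a contradiction. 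So $Rx$ is a non-zero ideal with $Rx\cap Ann(I)=0$, contradicting $Ann(I)\leq_e R$; therefore $I$ is nilpotent.

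I expect the converse to be the main obstacle, and within it the only non-elementary point is the fact that minimal primes of a Noetherian ring are annihilators of elements (i.e.\ ${\rm Min}(R)\subseteq{\rm Ass}(R)$); granting this, the computation $Ann(rx)=\mathfrak{p}$ and the clash with essentiality are immediate. A possible alternative for the converse is to invoke that $Soc(R)$ equals the intersection of all essential ideals, so that $Ann(I)\leq_e R$ forces $I\cdot Soc(R)=0$; but extracting nilpotence of $I$ from this still seems to require structural information about $Soc(R)$ when $R$ is Noetherian, so the minimal-prime argument looks cleaner and more self-contained.
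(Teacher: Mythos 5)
Your proof is correct, but your converse takes a genuinely different route from the paper's. For the forward implication you give the usual chain argument (take $k$ maximal with $I^kJ\neq 0$, so that $0\neq I^kJ\subseteq Ann(I)\cap J$), which is essentially the argument the paper outsources to the proof of \cite[Lemma 3.1]{asl}, so there is nothing to compare there. For the converse, the paper works element-by-element with the singular-type ideal $\mathcal{J}=\{x\in R\mid Ann(x)\leq_e R\}$: using the ascending chain condition on annihilators it chooses $y=x^{n}$ with $Ann(y)=Ann(y^{2})$ maximal, notes $Ry\cap Ann(y)=(0)$, and concludes $Ry=(0)$ from essentiality, so each such $x$ is nilpotent; applying this to the generators $x_i$ of $I$ (via $Ann(I)\subseteq Ann(x_i)$) gives nilpotence of $I$. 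That argument is self-contained and really only uses ACC on annihilator ideals. You instead reduce, via Lemma \ref{lem1}, to finding a minimal prime $\mathfrak{p}$ with $I\nsubseteq\mathfrak{p}$, invoke the standard fact ${\rm Min}(R)\subseteq{\rm Ass}(R)$ for Noetherian rings to write $\mathfrak{p}=Ann(x)$ with $x\neq 0$, and then the computation $Ann(rx)=\mathfrak{p}$ for $rx\neq 0$ forces $I\subseteq\mathfrak{p}$, so $Rx\cap Ann(I)=(0)$, contradicting essentiality; this is correct. What each approach buys: your key input (minimal primes are associated primes) is a heavier, though completely standard, piece of commutative algebra, and the rest of your argument is a clean prime-avoidance computation; the paper's maximal-annihilator trick avoids associated primes altogether and is slightly more general in that Noetherianness is only used for ACC on annihilators and finite generation of $I$. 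One small remark: your ``first step'' $Ann(I)\subseteq\mathfrak{p}$ is never used in the final contradiction and can simply be deleted.
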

\begin{proof}
{Let $I$ be a nilpotent ideal of $R$. If $I=(0)$, then there is nothing to prove. Hence assume that $I\neq (0)$. By the proof of part 1 in \cite[Lemma 3.1]{asl}, $Ann(I)\leq_e R$. Conversely, suppose that $I$ is an ideal of a Noetherian ring $R$ and $Ann(I)\leq_e R$.  Since $R$ is Noetherian, there exist $x_1,\ldots, x_n\in R$ such that $I=R(x_1,\ldots, x_n)$. We show that $x_i$ is nilpotent, for every $1\leq i \leq n$. Let $\mathcal{J}=\{x\in R|\,\, Ann(x)\leq_e R\}$. It is not hard to see $\mathcal{J}$ is a proper ideal of $R$. We claim that $\mathcal{J}$ is nilpotent. It is enough to prove that every element of $\mathcal{J}$ is nilpotent. Let $\mathcal{B}=\{Ann(x)\leq_e R|\,\,x\in R\}$. For a given $x\in \mathcal{J}$, let $n\in \mathbb{N}$ be such that $Ann(x^n)$ is  maximal in $\mathcal{B}$. If we let $y=x^n$, then $y\in \mathcal{J}$ and $Ann(y)=Ann(y^2)$. Now, we have $Ry\cap Ann(y)=(0)$ which implies that $Ry=(0)$, i.e., $x^n=0$ and so the claim is proved. Since $Ann(I)\leq_e R$, we deduce that  $Ann(x_i)\leq_e R$ and hence $x_i\in \mathcal{J}$, for every $1\leq i \leq n$. Therefore, $I$ is nilpotent, as every $x_i$ is nilpotent.
}
\end{proof}


The next result states that for every Noetherian ring $R$ the graph $\mathcal{EG}(R)$ is weakly perfect.

\begin{thm}\label{colcol}
Let $R$ be a Noetherian ring and $\mathcal{A}=\{I \in A(R)^*|\,\, I\subseteq {\rm Nil}(R)\}$. Then the following statements hold:

$(1)$ If $|{\rm Min}(R)|\geq 2$, then $\omega(\mathcal{EG}(R))=\chi(\mathcal{EG}(R))=|{\rm Min}(R)|+|\mathcal{A}|.$

$(2)$ If $|{\rm Min}(R)|=1$, then $|\mathcal{A}|\leq\omega(\mathcal{EG}(R))=\chi(\mathcal{EG}(R))\leq |\mathcal{A}|+1.$

\end{thm}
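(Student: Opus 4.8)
The plan is to split into the two cases according to $|{\rm Min}(R)|$, and in each case to produce a large clique (lower bound for $\omega$) and a proper coloring with the matching number of colors (upper bound for $\chi$); since always $\omega \le \chi$, equality follows. The key structural observation, which I would establish first, is a description of the vertices and the adjacency in terms of nilpotency. By Lemma~\ref{lem2}, for a vertex $I \in A(R)^*$ we have $Ann(I)\le_e R$ exactly when $I$ is nilpotent, i.e. (using Lemma~\ref{lem1} and the fact that ideals of a Noetherian ring are finitely generated) exactly when $I \subseteq {\rm Nil}(R)$, i.e. $I \in \mathcal{A}$. More generally, two distinct vertices $I,J$ are adjacent iff $Ann(IJ)\le_e R$ iff $IJ$ is nilpotent iff $IJ \subseteq {\rm Nil}(R)$; in particular every vertex in $\mathcal{A}$ is adjacent to every other vertex (since $IJ \subseteq I \subseteq {\rm Nil}(R)$ when $I \in \mathcal{A}$), so $\mathcal{A}$ is contained in a clique and the vertices of $\mathcal{A}$ are ``universal''. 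This reduces the problem to understanding $\mathcal{EG}(R)$ restricted to the vertices \emph{not} in $\mathcal{A}$, i.e. ideals $I$ with $I \not\subseteq {\rm Nil}(R)$.

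For the remaining vertices I would use the prime avoidance / minimal prime structure. Write ${\rm Nil}(R) = \mathfrak{p}_1 \cap \cdots \cap \mathfrak{p}_k$ where ${\rm Min}(R) = \{\mathfrak{p}_1,\dots,\mathfrak{p}_k\}$ (finite, as $R$ is Noetherian). For a vertex $I \notin \mathcal{A}$, the set of minimal primes \emph{not} containing $I$ is nonempty; two such vertices $I,J$ fail to be adjacent whenever $IJ \not\subseteq {\rm Nil}(R)$, which happens precisely when some $\mathfrak{p}_\ell$ contains neither $I$ nor $J$. So a clique among the non-$\mathcal{A}$ vertices, together with $\mathcal{A}$, gives a clique in which any two non-$\mathcal{A}$ members have \emph{complementary} "missing prime" patterns — roughly, each minimal prime can be missed by at most one clique member. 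The natural candidate clique is $\mathcal{A} \cup \{J_1,\dots,J_k\}$ where $J_\ell = \bigcap_{m\ne \ell}\mathfrak{p}_m$ (or a suitable nonzero ideal with $J_\ell \not\subseteq \mathfrak p_\ell$ but $J_\ell \subseteq \mathfrak p_m$ for $m \ne \ell$); one checks $J_\ell J_{\ell'} \subseteq {\rm Nil}(R)$ for $\ell \ne \ell'$, and that each $J_\ell$ has nonzero annihilator so is a genuine vertex. This yields $\omega(\mathcal{EG}(R)) \ge |{\rm Min}(R)| + |\mathcal{A}|$ when $k \ge 2$, and $\omega \ge |\mathcal{A}|$ when $k = 1$ (trivially, since $\mathcal{A}$ itself is a clique). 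Some care is needed to check these $J_\ell$ are nonzero and have nonzero annihilator; the "moreover" part and the structure of $A(R)$ should handle this, and this is one place where Noetherianity is used essentially.

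For the upper bound on $\chi$ I would color as follows: give each vertex of $\mathcal{A}$ its own private color ($|\mathcal{A}|$ colors), and color each vertex $I \notin \mathcal{A}$ by the color $c(I) = \min\{\ell : \mathfrak{p}_\ell \not\supseteq I\}$ — the smallest index of a minimal prime not containing $I$. This uses at most $|{\rm Min}(R)|$ additional colors. I must check this is proper: two non-$\mathcal{A}$ vertices $I,J$ with $c(I) = c(J) = \ell$ both have $\mathfrak p_\ell \not\supseteq I$ and $\mathfrak p_\ell \not\supseteq J$, hence $\mathfrak p_\ell \not\supseteq IJ$ (as $\mathfrak p_\ell$ is prime), so $IJ \not\subseteq {\rm Nil}(R)$, so $I$ and $J$ are non-adjacent — good. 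Vertices of $\mathcal{A}$ get distinct colors among themselves and distinct from the non-$\mathcal{A}$ colors, so the coloring is proper. This gives $\chi(\mathcal{EG}(R)) \le |{\rm Min}(R)| + |\mathcal{A}|$ always; combined with the case analysis on the lower bound, when $k \ge 2$ we get the exact formula $|{\rm Min}(R)| + |\mathcal{A}|$, and when $k = 1$ we get $|\mathcal{A}| \le \omega \le \chi \le |\mathcal{A}| + 1$. The main obstacle I anticipate is not the coloring (which is clean) but verifying that the candidate clique ideals $J_\ell$ genuinely lie in $A(R)^*$ — i.e. produce nonzero ideals with nonzero annihilators — and handling the degenerate possibilities (e.g. when some $\bigcap_{m \ne \ell}\mathfrak p_m$ collapses into ${\rm Nil}(R)$); that requires a careful argument using that $R$ is Noetherian and $|{\rm Min}(R)| \ge 2$, possibly replacing the naive $J_\ell$ by $J_\ell \cap K$ for a suitable small ideal $K$ with nonzero annihilator, or invoking that $Soc$-type / associated-prime considerations guarantee such vertices exist.
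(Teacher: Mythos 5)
Your strategy for part (1) is essentially the paper's: the same characterization of adjacency ($IJ\subseteq{\rm Nil}(R)$, via Lemmas \ref{lem1} and \ref{lem2}), a clique $\mathcal{A}\cup\{J_1,\ldots,J_k\}$ built from ``all-but-one'' minimal primes (the paper uses the products $\hat{\mathfrak p}_i=\prod_{j\neq i}\mathfrak p_j$ where you use intersections, an immaterial difference), and the identical coloring $c(I)=\min\{\ell:\ I\nsubseteq\mathfrak p_\ell\}$ with private colors for $\mathcal{A}$. The genuine gap is in part (2): the theorem asserts $\omega(\mathcal{EG}(R))=\chi(\mathcal{EG}(R))$ when $|{\rm Min}(R)|=1$, but you only derive the chain $|\mathcal{A}|\leq\omega\leq\chi\leq|\mathcal{A}|+1$, which leaves open the possibility $\omega=|\mathcal{A}|$ and $\chi=|\mathcal{A}|+1$. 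The paper closes this by determining the graph completely: writing $\mathcal{C}$ for the vertices not contained in the unique minimal prime, either $\mathcal{C}$ is finite, in which case $R$ is Artinian local and $\mathcal{EG}(R)=K_t$, or $\mathcal{C}$ is infinite and $\mathcal{EG}(R)=K_t\vee\overline{K}_\infty$; in both cases $\omega=\chi$ exactly. Your own properness argument already contains the needed fact --- with $k=1$ every vertex outside $\mathcal{A}$ gets the same color, i.e.\ the non-$\mathcal{A}$ vertices form an independent set, and every vertex of $\mathcal{A}$ is universal --- so the repair is short ($\omega=\chi=|\mathcal{A}|$ if $\mathcal{C}=\varnothing$ and $\omega=\chi=|\mathcal{A}|+1$ otherwise), but as written the middle equality of the statement is not proved.

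A secondary point: you explicitly leave unverified that the clique members $J_\ell$ lie in $A(R)^*$. This does need an argument, e.g.\ since ${\rm Nil}(R)$ is nilpotent in a Noetherian ring, $J_\ell^N\mathfrak p_\ell^N=(0)$ for some $N$, while $\mathfrak p_\ell^N\neq(0)$ because minimal primes are incomparable; taking $s\geq1$ minimal with $J_\ell^{s}\mathfrak p_\ell^N=(0)$ exhibits the nonzero ideal $J_\ell^{s-1}\mathfrak p_\ell^N$ inside ${\rm Ann}(J_\ell)$, and $J_\ell\neq(0)$, $J_\ell\notin\mathcal{A}$, $J_\ell\neq J_{\ell'}$ follow from $J_\ell\nsubseteq\mathfrak p_\ell$. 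The paper itself glosses over this verification (and disposes of the reduced case by citing earlier results), so this is a fixable omission rather than a flaw in the plan, but in a complete write-up it should be included.
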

\begin{proof}
{ If $R$ is reduced, then the result follows from \cite[Theorem 2.5]{asl} and \cite[Theorem 8]{aa}. Hence we may suppose that $\mathcal{A}\neq \varnothing$. By Lemma \ref{lem2} and part 1 in \cite[Lemma 3.1]{asl}, every vertex of $\mathcal{A}$ is adjacent to every other vertex of $\mathcal{EG}(R)$. Hence, if $|\mathcal{A}|=\infty$, then $\omega(\mathcal{EG}(R))=\chi\mathcal{EG}(R))=\infty$. So assume that $|\mathcal{A}|<\infty$ and $\mathcal{A}=\{J_1,\ldots,J_t\}$. We continue the proof in two following cases:

 $(1)$ $|{\rm Min}(R)|\geq 2$. Let ${\rm Min}(R)=\{\mathfrak{p}_1,\ldots,\mathfrak{p}_n\}$ (We note that $R$ is Noetherian and so $|{\rm Min}(R)|<\infty$) and $\mathfrak{\hat {p}}_i=\mathfrak{p}_1\ldots\mathfrak{p}_{i-1}\mathfrak{p}_{i+1}\ldots\mathfrak{p}_n$,  for every $1\leq i \leq n$. Since $\mathfrak{\hat{p}}_i\mathfrak{\hat {p}}_j\subseteq {\rm Nil}(R)$, we deduce from Lemmas \ref{lem1} and \ref{lem2} that $\mathfrak{\hat{p}}_i,\mathfrak{\hat {p}}_j$ are adjacent vertices in $\mathcal{EG}(R)$, for every $1\leq i\neq j \leq n$. Thus by part 1 of \cite[Lemma 3.1]{asl}, $\mathcal{EG}(R)[\mathcal{A}\cup\mathcal{B}]=K_{t+n}$, where $\mathcal{B}=\{\mathfrak{\hat{p}}_1,\ldots,\mathfrak{\hat{p}}_n\}$ and so $\omega(\mathcal{EG}(R))\geq t+n$. To complete the proof, we show that $\chi(\mathcal{EG}(R))\leq n+t$.  Define $c:V(\mathcal{EG}(R))\longrightarrow \{1,\ldots,n+t\}$ by $$c(I)=
  \begin{cases}
   \min\{1\leq i\leq n|\, I\nsubseteq \mathfrak{p}_i\} ;      & I\notin \mathcal{A},\,\,\, \\
   n+j;       & I=J_j,\, {\rm for\, some} \,1\leq j\leq t.\,\,\, \,\,
 \end{cases}$$

We show that $c$ is a proper vertex coloring for $\mathcal{EG}(R)$. Suppose to the contrary, $I,J$ are adjacent vertices and $c(I)=c(J)=i$. Thus $Ann(IJ)\leq_e R$. Since $R$ is Noetherian, Lemma \ref{lem2} implies that $IJ \subseteq {\rm Nil}(R)$ and hence $I\subseteq \mathfrak{p}_i$ or $J\subseteq \mathfrak{p}_i$, a contradiction. Therefore, $\chi(\mathcal{EG}(R))\leq n+t$, as desired.

$(2)$ $|{\rm Min}(R)|=1$.  Let ${\rm Min}(R)=\{\mathfrak{p}\}$ and $\mathcal{C}=\{I\in A(R)^*|\,\,I\nsubseteq \mathfrak{p}\}$. If $|\mathcal{C}|<\infty$, then $R$ is an Artinian local ring, as $|\mathcal{A}|<\infty$ and so by \cite[Lemma 3.2]{asl}, $\mathcal{EG}(R)=K_t$. Hence assume that $|\mathcal{C}|=\infty$. We show that $\mathcal{EG}(R)=K_t\vee \overline{K}_{\infty}$, where $\overline{K}_{\infty}$ is the complement of ${K}_{\infty}$. We have only to prove that $\mathcal{C}$ is an independent set. For if not, there exist $I,J\in \mathcal{C}$ such that $Ann(IJ)\leq_e R$. By Lemmas \ref{lem1} and \ref{lem2}, $IJ\subseteq \mathfrak{p}$ and thus $I\subseteq \mathfrak{p}$ or $J\subseteq \mathfrak{p}$ which contradicts $I,J\in \mathcal{C}$. Therefore, $\mathcal{EG}(R)=K_t\vee \overline{K}_{\infty}$ and so $\omega(\mathcal{EG}(R))=\chi(\mathcal{EG}(R))=t+1.$~~
}
\end{proof}

%


The next example investigates Theorem \ref{colcol} in use.

\begin{example}
{\rm $(1)$ Let $n$ be a positive integer with prime factorization  $n=p_1^{\alpha_1}\ldots p_k^{\alpha_k}$, where $\alpha_i$'s are positive integers, $p_i$'s are pairwise distinct primes and $k\geq 2$. If $R=\mathbb{Z}_n$, then by part $(1)$ of Theorem \ref{colcol}, $$\omega(\mathcal{EG}(R))=\chi(\mathcal{EG}(R))=\prod_{i=1}^k\alpha_i+k-1,$$

where $|{\rm Min}(R)|=k$ and $|\mathcal{A}|=\prod_{i=1}^k\alpha_i-1$.

$(2)$ Let $R= \mathbb{Z}_2[X,Y]/(XY,X^2)$, $x=X+(XY,X^2)$ and $y=Y+(XY,X^2)$. Then  $\mathrm{Min}(R)=\{\mathfrak{p}\}$, where $\mathfrak{p}=\{0,x\}$ and $Z(R)=R(x,y)$. By the notations in the proof of part $(2)$ in Theorem \ref{colcol}, $|\mathcal{A}|=1$ and $|\mathcal{C}|=\infty$. Therefore, $\mathcal{EG}(R)=K_{1}\vee\overline{K}_\infty$ and $\omega(\mathcal{EG}(R))=\chi(\mathcal{EG}(R))= 1+1.$
}
\end{example}

Next, we classify rings $R$ whose essential annihilating-ideal graphs have finite clique numbers.

\begin{thm}\label{colorin}
Let $R$ be a ring and $\omega(\mathcal{EG}(R))<\infty$. Then the following statements hold:

$(1)$ If $R$ is reduced, then $\omega(\mathcal{EG}(R))=\chi(\mathcal{EG}(R))=|{\rm Min}(R)|$.

$(2)$ If $R$ is non-reduced,  then the following statements are equivalent:

$(i)$ $\omega(\mathcal{EG}(R))=\chi(\mathcal{EG}(R))=|A(R)^*|$; Indeed, $\mathcal{EG}(R)=K_n$, for some positive integer $n$.

$(ii)$  Either $R=\mathbb{F}_1\times \mathbb{F}_2$, where $\mathbb{F}_1,\mathbb{F}_2$ are two fields or $Z(R)={\rm Nil}(R)$.
\end{thm}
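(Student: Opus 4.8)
The plan is to handle the two parts separately, with part (1) being essentially a direct citation and part (2) requiring a structural analysis of non-reduced rings with finite clique number.

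For part (1), when $R$ is reduced the graph $\mathcal{EG}(R)$ coincides (on the relevant vertex set) with the annihilating-ideal graph studied in \cite{asl}, and the equality $\omega(\mathcal{EG}(R))=\chi(\mathcal{EG}(R))=|{\rm Min}(R)|$ should follow immediately from \cite[Theorem 2.5]{asl} together with \cite[Theorem 8]{aa} (these are exactly the references invoked in the proof of Theorem \ref{colcol}); note that finiteness of $\omega$ forces $|{\rm Min}(R)|<\infty$. So part (1) is not the obstacle.

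For part (2), I would first show $(ii)\Rightarrow(i)$. If $R=\mathbb{F}_1\times\mathbb{F}_2$, then $A(R)^*$ consists of the two ideals $\mathbb{F}_1\times 0$ and $0\times\mathbb{F}_2$, whose product is zero and whose annihilators are essential, so $\mathcal{EG}(R)=K_2$ — but wait, this case is reduced, so it really belongs under a degenerate reading; the substantive subcase is $Z(R)={\rm Nil}(R)$. In that case, for any two nonzero ideals $I,J$ with nonzero annihilator, $Ann(I)\neq 0$ means $I\subseteq Z(R)={\rm Nil}(R)$, and likewise $J\subseteq{\rm Nil}(R)$, so $IJ\subseteq{\rm Nil}(R)$. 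The key point is that $\omega(\mathcal{EG}(R))<\infty$ forces (via Lemma \ref{lem1} and Lemma \ref{lem2}, using that the relevant ring is then forced to be Noetherian — or more carefully, that $\mathcal{A}$ must be finite and one argues $R$ is Artinian local as in case (2) of Theorem \ref{colcol}) that $Ann(IJ)\leq_e R$, hence every pair of vertices is adjacent and $\mathcal{EG}(R)=K_n$ with $n=|A(R)^*|$. Then trivially $\omega=\chi=n=|A(R)^*|$.

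The harder direction is $(i)\Rightarrow(ii)$, and this is where the main obstacle lies. Assume $\mathcal{EG}(R)=K_n$, i.e. every two distinct vertices are adjacent. I would argue: since $R$ is non-reduced, ${\rm Nil}(R)\neq 0$, and I want to show either $R$ splits as a product of two fields or $Z(R)={\rm Nil}(R)$. Suppose $Z(R)\neq{\rm Nil}(R)$; pick $x\in Z(R)\setminus{\rm Nil}(R)$ and $0\neq y\in{\rm Nil}(R)$. The idea is to produce, from the completeness of $\mathcal{EG}(R)$, strong constraints: any ideal $I\subseteq{\rm Nil}(R)$ with $Ann(I)\neq 0$ is adjacent to everything automatically, but ideals in $Z(R)\setminus{\rm Nil}(R)$ being adjacent to each other forces their products into ${\rm Nil}(R)$ as well, which pushes $Z(R)$ toward being a union of minimal primes whose pairwise products are nilpotent. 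Combined with finiteness of $A(R)^*$ (which $\mathcal{EG}(R)=K_n$ gives), $R$ is Artinian, so $R\cong R_1\times\cdots\times R_k$ with each $R_i$ Artinian local. One then checks that if $k\geq 3$ the graph contains a vertex not adjacent to another (e.g. $R_1\times 0\times\cdots$ versus $0\times R_2\times\cdots$ have product zero but their annihilators need not be essential when $k\geq 3$), contradicting completeness; if $k=2$ completeness forces both factors to be fields (a local factor with nontrivial nilradical would introduce non-adjacent vertices); and if $k=1$, $R$ is Artinian local so $Z(R)$ is the maximal ideal $\mathfrak{m}$ and ${\rm Nil}(R)=\mathfrak{m}$ as well (the unique prime), giving $Z(R)={\rm Nil}(R)$. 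The delicate part is the case analysis showing $k\leq 2$ and pinning down exactly when completeness survives; I expect to lean on \cite[Lemma 3.2]{asl} and the classification of complete $\mathcal{EG}(R)$ from \cite{asl} to shortcut some of this bookkeeping.
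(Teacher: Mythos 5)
Your skeleton matches the paper's: part (1) by citing \cite[Theorem 2.5]{asl} and \cite[Theorem 8]{aa}, and in part (2) the direction $(i)\Rightarrow(ii)$ via ``finite complete graph $\Rightarrow$ $R$ Artinian $\Rightarrow$ classification of complete $\mathcal{EG}(R)$'' (the paper does this in two citations: \cite[Theorem 1.4]{beh1} and \cite[Theorem 3.6]{asl}). However, there are two genuine problems in your write-up.

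First, in $(ii)\Rightarrow(i)$ with $Z(R)={\rm Nil}(R)$: from $I,J\subseteq{\rm Nil}(R)$ you only get $IJ\subseteq{\rm Nil}(R)$, and Lemma \ref{lem2} needs $IJ$ to be \emph{nilpotent}. Your justification for this step --- that $\omega(\mathcal{EG}(R))<\infty$ ``forces the ring to be Noetherian'' or that one can ``argue $R$ is Artinian local as in case (2) of Theorem \ref{colcol}'' --- is circular, since Theorem \ref{colcol} \emph{assumes} $R$ Noetherian; the whole point of this step is to derive such finiteness from $\omega<\infty$. The paper supplies the missing argument: if ${\rm Nil}(R)$ were not finitely generated there would be infinitely many distinct principal ideals $Ra_i\subseteq{\rm Nil}(R)$, each nilpotent and hence adjacent to every other vertex, producing an infinite clique; so ${\rm Nil}(R)$ is finitely generated, hence nilpotent by \cite[Lemma 21.8]{sharp}, and then (since $\mathcal{A}$ is finite) $Rx$ and $Ann(x)$ are Artinian $R$-modules for $x\in{\rm Nil}(R)^*$, so $Rx\simeq R/Ann(x)$ gives $R$ Artinian, $R$ is local because $Z(R)={\rm Nil}(R)$, and \cite[Lemma 3.2]{asl} gives completeness. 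Without something of this kind your proof of $(ii)\Rightarrow(i)$ is incomplete.

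Second, in $(i)\Rightarrow(ii)$ your concrete non-adjacency witness for $k\geq 3$ is wrong: the vertices $R_1\times 0\times\cdots\times 0$ and $0\times R_2\times 0\times\cdots\times 0$ have product $(0)$, so $Ann(IJ)=Ann(0)=R$, which \emph{is} essential, and these vertices are always adjacent. A correct witness is, e.g., $I=R_1\times 0\times\cdots\times 0$ and $J=R_1\times R_2\times 0\times\cdots\times 0$: here $IJ=I$ and $Ann(IJ)=0\times R_2\times\cdots\times R_k$ meets $I$ trivially, so it is not essential. (Also note that the paper does not decompose $R$ by hand at all here; once \cite[Theorem 1.4]{beh1} gives that $R$ is Artinian, it invokes the classification of complete essential annihilating-ideal graphs in \cite[Theorem 3.6]{asl}, which is the shortcut you said you expected to lean on --- if you do lean on it, the erroneous case analysis can simply be deleted.)
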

\begin{proof}
{$(1)$ It follows from \cite[Theorem 8]{aa} and \cite[Theorem 2.5]{asl}.

$(2)$ $(i)\Rightarrow (ii)$ Suppose that $\mathcal{EG}(R)=K_n$,
for some positive integer $n$. By
\cite[Theorem 1.4]{beh1}, $R$ is
 Artinian. Now, the result follows from \cite[Theorem 3.6]{asl}.

$(ii)\Rightarrow (i)$ If  $R=\mathbb{F}_1\times \mathbb{F}_2$, where $\mathbb{F}_1,\mathbb{F}_2$ are two fields, then $\mathcal{EG}(R)=K_2$. Hence suppose that $\omega(\mathcal{EG}(R))<\infty$ and $Z(R)={\rm Nil}(R)$. We first claim that ${\rm Nil}(R)$ is finitely generated. For if not, there exists an infinite subset $\{a_i\}_{i \in \Lambda}$ of ${\rm Nil}(R)$ such that $Ra_i\neq Ra_j$, for every $i\neq j$. By \cite[Lemma 3.1]{asl}, $\{Ra_i\}_{i \in \Lambda}$ forms an infinite clique in $\mathcal{EG}(R)$, a contradiction. The claim now is proved. By \cite[Lemma 21.8]{sharp}, ${\rm Nil}(R)$ is nilpotent. Let $\mathcal{A}=\{I \in A(R)^*|\,\, I\subseteq {\rm Nil}(R)\}$. Since $\omega(\mathcal{EG}(R))<\infty$, we conclude that $\mathcal{A}$ is a finite set. Hence, if $x\in {\rm Nil}(R)^*$, then we may consider $Rx$ as an Artnian $R$-module. Moreover, $Z(R)={\rm Nil}(R)$ and $\omega(\mathcal{EG}(R))<\infty$ imply that $Ann(x)$ is an Artnian $R$-module, too. The isomorphism $Rx\simeq \frac{R}{Ann(x)}$ shows that $R$ is an Artinian ring. Since $Z(R)={\rm Nil}(R)$, $R$ is local. Finally, \cite[Lemma 3.2]{asl} completes the proof.
}
\end{proof}

In the sequel, we focus on rings  in which $\omega(\mathcal{EG}(R))=\chi(\mathcal{EG}(R))=2$.

\begin{remark}\label{rem}
{\rm In \cite[Theorem 3.7]{asl}, it has been claimed that ``for a ring $R$ with at least one minimal ideal we have $\mathcal{EG}(R)=K_{m,n}$, where $m,n\geq 2$ if and only if $R=D\times S$, where $D,S$ are two integral domains which are not fields". It should be noted that if $R=D\times S$, where $D,S$ are two integral domains which are not fields, then $R$ does not have any minimal ideal, in other words, ${\rm Soc}(R)=0$}.
\end{remark}

The next result fixes \cite[Theorem 3.7]{asl}.

\begin{lem}\label{soceee}
Let $R$ be a ring. Then $\mathcal{EG}(R)=K_{\infty,\infty}$ if and only if $R$ is reduced, $|{\rm Min}(R)|=2$ and ${\rm Soc}(R)=0$.
\end{lem}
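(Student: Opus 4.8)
**The plan is to prove both directions of the biconditional, using Theorem~\ref{colcol} (more precisely, its reduced case, which reduces to the annihilating-ideal graph results) for the harder forward direction.**

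First I would establish the easy direction. Assume $R$ is reduced with $|{\rm Min}(R)|=2$ and ${\rm Soc}(R)=0$; write ${\rm Min}(R)=\{\mathfrak{p}_1,\mathfrak{p}_2\}$. Since $R$ is reduced, ${\rm Nil}(R)=0$, so $\mathfrak{p}_1\cap\mathfrak{p}_2=0$ and every vertex $I$ of $\mathcal{EG}(R)$ satisfies $I\subseteq\mathfrak{p}_1$ or $I\subseteq\mathfrak{p}_2$ (an ideal with nonzero annihilator lies in a minimal prime). Put $V_i=\{I\in A(R)^*\mid I\subseteq\mathfrak{p}_i\}$; these are the two parts. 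As in the proof of Theorem~\ref{colcol}, $I,J$ in the same part give $IJ\subseteq\mathfrak{p}_i$, so $Ann(IJ)\supseteq\mathfrak{p}_{3-i}\neq 0$ but $Ann(IJ)$ is not essential (it misses $\mathfrak{p}_i$, since reducedness forces $\mathfrak{p}_{3-i}\cap\mathfrak{p}_i=0$), hence $I\not\sim J$; and $I\in V_1$, $J\in V_2$ give $IJ=0$, so $Ann(IJ)=R\leq_e R$, hence $I\sim J$. Thus $\mathcal{EG}(R)$ is complete bipartite with parts $V_1,V_2$. It remains to see each $V_i$ is infinite: if some $V_i$ were finite, then (the proof technique of) \cite[Lemma 3.1, Lemma 3.2]{asl} would force $R$ to be Artinian, whence $R\cong \mathbb{F}_1\times\mathbb{F}_2$ (two fields, by reducedness and $|{\rm Min}(R)|=2$), making $\mathcal{EG}(R)=K_{1,1}$ and giving $R$ a minimal ideal, contradicting ${\rm Soc}(R)=0$. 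So $\mathcal{EG}(R)=K_{\infty,\infty}$.

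For the converse, suppose $\mathcal{EG}(R)=K_{\infty,\infty}$. In particular $\mathcal{EG}(R)$ is triangle-free, so $\omega(\mathcal{EG}(R))=2<\infty$; by Theorem~\ref{colorin}, $R$ is reduced (the non-reduced case of that theorem yields $\mathcal{EG}(R)=K_n$, which is not bipartite of the required shape) — or, if one prefers a direct argument, a non-reduced $R$ has a nonzero nilpotent principal ideal $Rx$ adjacent to everything by Lemma~\ref{lem2}, producing a triangle as soon as $\mathcal{EG}(R)$ has two nonadjacent vertices. With $R$ reduced, Theorem~\ref{colorin}(1) gives $2=\omega(\mathcal{EG}(R))=|{\rm Min}(R)|$. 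Finally, ${\rm Soc}(R)=0$: a minimal ideal $M$ of $R$ satisfies $Ann(M)\leq_e R$ — indeed ${\rm Soc}(R)$ is contained in every essential ideal, so if ${\rm Soc}(R)\neq0$ then $M\subseteq{\rm Soc}(R)$ is a simple module and a short computation shows $Ann(M)$ meets every nonzero ideal — so $M$ would be adjacent to \emph{every} other vertex, again creating a triangle with any nonadjacent pair in the (infinite, bipartite) graph. Hence ${\rm Soc}(R)=0$.

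The main obstacle is the ${\rm Soc}(R)=0$ clause, which is exactly the point flagged in Remark~\ref{rem} as the error in \cite[Theorem 3.7]{asl}: one must argue carefully that a nonzero socle forces a universal vertex and hence destroys bipartiteness (equivalently, creates a triangle), and conversely that ${\rm Soc}(R)=0$ together with reducedness and $|{\rm Min}(R)|=2$ prevents $R$ from being Artinian, so that both parts are genuinely infinite rather than finite. Everything else is bookkeeping built on Theorems~\ref{colcol} and~\ref{colorin} and the cited structure results from \cite{asl}.
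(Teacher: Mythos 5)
Your overall skeleton is right, and one step is a genuine improvement over the paper: deducing $|{\rm Min}(R)|=2$ from $\omega(\mathcal{EG}(R))=2$ via Theorem \ref{colorin}(1) replaces the paper's explicit three-minimal-primes/3-cycle construction. But the heart of the lemma --- the ${\rm Soc}(R)=0$ clause in the forward direction, which you yourself flag as the delicate point --- rests on a false claim. You assert that a minimal ideal $M$ satisfies $Ann(M)\leq_e R$ (``$Ann(M)$ meets every nonzero ideal''), hence that $M$ is a universal vertex. Once $R$ is known to be reduced (as it is at that stage of your argument), a minimal ideal satisfies $M^2=M$, so by Brauer's Lemma $M=Re$ for an idempotent $e$ and $Ann(M)=R(1-e)$ is a direct-summand complement of $M$; it already misses $M$ itself, so it is not essential (concretely, in $R=\mathbb{F}_1\times\mathbb{F}_2$, $M=\mathbb{F}_1\times 0$ and $Ann(M)=0\times\mathbb{F}_2$). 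Essentiality of $Ann(M)$ is Lemma \ref{lem2} and needs $M$ nilpotent, i.e.\ exactly the non-reduced case you have excluded. Moreover, even granting a universal vertex, ``a triangle with any nonadjacent pair'' is not a triangle (a universal vertex together with a nonadjacent pair is only a path); the contradiction with $K_{\infty,\infty}$ must come from taking an \emph{adjacent} pair, or from observing that $K_{\infty,\infty}$ has no universal vertex at all. The paper instead proves ${\rm Soc}(R)=0$ by a case split: if $R$ is decomposable, each factor is forced to be a domain (otherwise a triangle appears) and Remark \ref{rem} applies; if $R$ is indecomposable, a minimal ideal $M$ would have $M^2=0$ (impossible by reducedness) or $M^2=M$, and Brauer's Lemma would decompose $R$. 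Your step needs an argument of this kind, e.g.\ reduced $+$ a minimal ideal $\Rightarrow R\cong\mathbb{F}\times D$ with $D$ a domain (using $|{\rm Min}(R)|=2$), whose graph is a star, not $K_{\infty,\infty}$.

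In the converse direction there is a second invalid step: you justify $|V_i|=\infty$ by claiming that a finite part would force $R$ to be Artinian. That implication is false ($R=\mathbb{Z}\times\mathbb{F}_2$ has $V_1=\{0\times\mathbb{F}_2\}$ finite but $R$ is not Artinian), and \cite[Lemmas 3.1 and 3.2]{asl} do not say anything of the sort. The correct (and much shorter, and the paper's) reason is that a finite nonempty $V_i$ contains a minimal element, which is then a minimal ideal of $R$, contradicting ${\rm Soc}(R)=0$. Two smaller repairs: Theorem \ref{colorin}(2) is an equivalence of statements (i) and (ii), not the assertion that non-reduced with finite clique number forces $\mathcal{EG}(R)=K_n$, so reducedness should be obtained by your fallback argument (nilpotent $Rx$ is a universal vertex, contradiction again via an adjacent pair or the absence of universal vertices in $K_{\infty,\infty}$); and in the same-part nonadjacency computation you should first note $IJ\neq 0$ (immediate from primeness of $\mathfrak{p}_{3-i}$ and $\mathfrak{p}_1\cap\mathfrak{p}_2=0$) and then use $Ann(IJ)\cap IJ=0$ in a reduced ring (or $Ann(IJ)=\mathfrak{p}_{3-i}$ exactly), rather than infer that $Ann(IJ)$ misses $\mathfrak{p}_i$ merely from $\mathfrak{p}_1\cap\mathfrak{p}_2=0$, since a priori $Ann(IJ)$ could be larger than $\mathfrak{p}_{3-i}$.
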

\begin{proof}
{Let $\mathcal{EG}(R)=K_{\infty,\infty}$. We first show that $R$ is reduced. If $0\neq x \in {\rm Nil}(R)$, then $Rx$ is adjacent to every other vertex, a contradiction and so ${\rm Nil}(R)=(0)$. Next, we prove that $|{\rm Min}(R)|=2$. Since $R$ is  reduced, by  \cite[Corollary 2.4]{Huckaba}, $Z(R)=\cup_{\mathfrak{p}\in\mathrm{Min}(R) }\mathfrak{p}$. Suppose that $\mathfrak{p}_1$, $\mathfrak{p}_2$ and $\mathfrak{p}_3$ are three distinct minimal prime ideals. If $x\in \mathfrak{p}_1\setminus \mathfrak{p}_2\cup\mathfrak{p}_3$, then $\mathrm{Ann}(x)\subset \mathfrak{p}_2\cap\mathfrak{p}_3$. Let $0\neq y\in \mathrm{Ann}(x)$. Since $Rx\mathfrak{p}_2\neq (0)$,  let $a\in Rx\cap\mathfrak{p}_2$. As $R$ is  reduced, $Rx\cap\mathrm{Ann}(x)=(0)$. This implies that $a\notin \mathrm{Ann}(x)$. Since $a, y\in\mathfrak{p}_2 $, we have $a+y=z\in \mathfrak{p}_2 $ and so $\mathrm{Ann}(z)\neq (0)$. By  \cite[Corollary 2.2]{Huckaba}, $hz=0$ for some $h\notin \mathfrak{p}_2$. Since $\mathrm{Ann}(z)=\mathrm{Ann}(y)\cap\mathrm{Ann}(a)$, $Ra-Ry-Rh-Ra$ is a cycle of length 3 which is impossible. Thus $|{\rm Min}(R)|\leq 2$. It is clear that $|{\rm Min}(R)|=1$ means $R$ is an integral domain and hence $|{\rm Min}(R)|=2$. To see ${\rm Soc}(R)=0$, we consider two following cases:

$(1)$ $R$ is decomposable. Thus $R=R_1\times R_2$, where $R_1,R_2$ are two rings. If $R_1$ is not an integral domain, then there exists an ideal $I\in A(R_1)^*$. If $I^2=0$, then $(I,(0))$ is adjacent to all other vertices which contradicts $\mathcal{EG}(R)=K_{\infty,\infty}$. Thus $I\neq Ann(I)$. Thus $(I,(0))-((0),R_2)-(Ann(I),(0))-(I,(0))$ forms a triangle in $\mathcal{EG}(R)$ which is impossible and so $R_1$ is an integral domain. Similarly,  $R_2$ is an integral domain, too. Hence $R=D_1\times D_2$, where $D_1,D_2$ are two integral domains. By Remark \ref{rem}, ${\rm Soc}(R)=0$.

$(2)$ $R$ is not decomposable. If $I$ is a non-zero minimal ideal of $R$, then either $I^2=(0)$ or $I^2=I$. Since $R$ is reduced, $I^2=(0)$ implies that $I=(0)$, a contradiction. If  $I^2=I$, then by Brauer's Lemma (see \cite[10.22]{lam}), $R=Re\times R(1-e) $, where $e$ is an idempotent element of $R$ which is impossible. Therefore, $R$ does not have any minimal ideal and so ${\rm Soc}(R)=0$.

Conversely,  suppose that $R$ is reduced, ${\rm Min}(R)=\{\mathfrak{p}_1,\mathfrak{p}_2\}$ and ${\rm Soc}(R)=0$. Since $R$ is reduced, we have  $Z(R)=\mathfrak{p}_1\cup\mathfrak{p}_2$ and $\mathfrak{p}_1\cap\mathfrak{p}_2=(0)$, by \cite[Corollary 2.4]{Huckaba}. Let $V_1, V_2$ be the sets of all non-zero annihilating-ideals contained in $\mathfrak{p}_1, \mathfrak{p}_2$, respectively. It is easy to check that $\mathcal{EG}(R)=K_{|V_1|,|V_2|}$. Finally, we have $|V_1|=|V_2|=\infty$, as ${\rm Soc}(R)=0$.
}
\end{proof}
\begin{cor}\label{soc}
Let $R$ be a ring which contains a non-trivial idempotent $e$. Then $\mathcal{EG}(R)=K_{\infty,\infty}$ if and only if $R=D_1\times D_2$, where $D_1,D_2$ are two integral domains which are not fields.
\end{cor}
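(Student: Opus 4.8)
The plan is to derive Corollary \ref{soc} directly from Lemma \ref{soceee} by showing that, in the presence of a non-trivial idempotent, the three conditions ``$R$ reduced, $|{\rm Min}(R)|=2$, ${\rm Soc}(R)=0$'' are equivalent to ``$R=D_1\times D_2$ with $D_1,D_2$ integral domains that are not fields.'' First I would handle the forward direction: assume $\mathcal{EG}(R)=K_{\infty,\infty}$ and let $e$ be the given non-trivial idempotent, so $R=Re\times R(1-e)$ with both factors nonzero. By Lemma \ref{soceee}, $R$ is reduced and $|{\rm Min}(R)|=2$; since minimal primes of a product $R_1\times R_2$ are exactly $\mathfrak{p}\times R_2$ and $R_1\times\mathfrak{q}$ for $\mathfrak{p}\in{\rm Min}(R_1)$, $\mathfrak{q}\in{\rm Min}(R_2)$, having exactly two of them forces each factor to have a unique minimal prime, and being reduced then forces each factor to be an integral domain. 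So $R=D_1\times D_2$. It remains to see that neither $D_i$ is a field: if, say, $D_1$ were a field, then $\mathcal{EG}(R)$ would be the essential annihilating-ideal graph of $D_1\times D_2$ with $D_1$ a field, and one checks the vertices are $D_1\times\{0\}$, $\{0\}\times D_2$ together with ideals $\{0\}\times J$ for proper nonzero ideals $J\trianglelefteq D_2$; the part containing $D_1\times\{0\}$ is a single vertex, so the graph is a star, not $K_{\infty,\infty}$ — contradiction. Hence both $D_1,D_2$ are non-fields.

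For the converse, suppose $R=D_1\times D_2$ with $D_1,D_2$ integral domains which are not fields. Then $R$ is reduced (a product of reduced rings), and by the description of minimal primes above, $|{\rm Min}(R)|=2$ (namely $0\times D_2$ and $D_1\times 0$). By Remark \ref{rem}, ${\rm Soc}(R)=0$ (this is precisely the content of the remark: a product of two domains that are not fields has trivial socle, since neither $D_i$ has a minimal ideal). Now Lemma \ref{soceee} applies and gives $\mathcal{EG}(R)=K_{\infty,\infty}$.

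The only genuinely substantive point is the non-field claim in the forward direction — ruling out the possibility that one factor is a field. I expect this to be the main obstacle, though it is routine: the cleanest route is simply to observe that if $D_1$ is a field then $R=D_1\times D_2$ has a minimal ideal, namely $D_1\times\{0\}$, so ${\rm Soc}(R)\neq 0$, directly contradicting the conclusion of Lemma \ref{soceee}. Everything else — reducedness, the count of minimal primes in a two-factor product, and the appeal to Remark \ref{rem} for the socle — is bookkeeping. I would keep the write-up short, citing Lemma \ref{soceee} and Remark \ref{rem} and doing only the minimal-prime count and the minimal-ideal observation explicitly.
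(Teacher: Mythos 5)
Your proof is correct, and it leans on the same two pillars as the paper's: Lemma \ref{soceee} together with the splitting $R=Re\times R(1-e)$, and the observation that ${\rm Soc}(R)=0$ rules out either factor being a field. The one place you diverge is in showing that both factors are integral domains: the paper re-runs the graph-theoretic argument of Case $1$ in the proof of Lemma \ref{soceee} (an annihilating ideal in a factor produces either a vertex adjacent to everything or a triangle, contradicting $K_{\infty,\infty}$), whereas you use only the \emph{statement} of the lemma --- $R$ reduced with $|{\rm Min}(R)|=2$ --- and the description of minimal primes of a product, so each factor is reduced with a unique minimal prime, hence a domain. Your version is more modular (it never reopens the lemma's proof) and makes the corollary a purely formal consequence of Lemma \ref{soceee} plus Remark \ref{rem}; the paper's version avoids the minimal-prime bookkeeping by recycling the graph argument. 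Both the minimal-ideal observation you settle on for the non-field step and your explicit treatment of the converse (which the paper dismisses as ``clear'') are fine.
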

\begin{proof}
{One side is clear. Conversely,  it is not hard to see that $R=Re\times R(1-e) $, where $e$ is a non-trivial idempotent element of $R$. A similar argument to that of Case $1$ in Lemma \ref{soceee} implies that both of $Re$ and $R(1-e)$ are integral domains. By Lemma \ref{soceee}, ${\rm Soc}(R)=0$ and so non of $Re$ and $R(1-e)$ are fields.
}
\end{proof}
\begin{lem}\label{dobakh}
Let $R$ be a ring. Then $\mathcal{EG}(R)$ is bipartite if and only if it is complete bipartite.
\end{lem}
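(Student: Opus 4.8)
The plan is to prove the non-trivial direction: assuming $\mathcal{EG}(R)$ is bipartite, I show it is complete bipartite (the reverse implication is immediate, as every complete bipartite graph is bipartite). I would split into the reduced and the non-reduced cases; in the non-reduced case the strategy is to exhibit a \emph{dominating} vertex and combine this with triangle-freeness, while in the reduced case the strategy is to bound $|\mathrm{Min}(R)|$ and quote the structure already worked out in Lemma \ref{soceee}.

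\textbf{The non-reduced case.} If $R$ is non-reduced, choose $0\neq x$ with $x^n=0$ and $n\geq 2$ minimal; then $y:=x^{n-1}$ satisfies $y\neq 0$ and $y^{2}=0$. I claim $Ry$ is a vertex dominating $\mathcal{EG}(R)$. First, $\mathrm{Ann}(Ry)=\mathrm{Ann}(y)\leq_{e}R$: for any non-zero ideal $K$ one has $yK\subseteq K$ and $yK\subseteq \mathrm{Ann}(y)$ (since $(yk)y=ky^{2}=0$), so $yK\subseteq K\cap\mathrm{Ann}(y)$; if $yK\neq(0)$ we are done, while if $yK=(0)$ then $K\subseteq\mathrm{Ann}(y)$, so again $K\cap\mathrm{Ann}(y)=K\neq(0)$. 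Since $\mathrm{Ann}(Ry)\ni y\neq 0$, $Ry\in A(R)^{*}$, and for any other vertex $J$ we have $\mathrm{Ann}((Ry)J)\supseteq\mathrm{Ann}(Ry)\leq_{e}R$, hence $Ry$ is adjacent to $J$. As $\mathcal{EG}(R)$ is bipartite it has no triangle, so $V(\mathcal{EG}(R))\setminus\{Ry\}$ is an independent set; therefore $\mathcal{EG}(R)$ is a star, in particular complete bipartite.

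\textbf{The reduced case.} Bipartiteness forces $\mathcal{EG}(R)$ to be triangle-free, so $\omega(\mathcal{EG}(R))\leq 2<\infty$, and Theorem \ref{colorin}(1) gives $|\mathrm{Min}(R)|=\omega(\mathcal{EG}(R))\leq 2$. If $|\mathrm{Min}(R)|=1$, then $R$ is an integral domain and $A(R)^{*}=\varnothing$, so there is nothing to prove. If $|\mathrm{Min}(R)|=2$, say $\mathrm{Min}(R)=\{\mathfrak{p}_{1},\mathfrak{p}_{2}\}$, then the argument in the converse direction of Lemma \ref{soceee} applies verbatim (it uses only reducedness and $|\mathrm{Min}(R)|=2$, not $\mathrm{Soc}(R)=0$): by \cite[Corollary 2.4]{Huckaba}, $Z(R)=\mathfrak{p}_{1}\cup\mathfrak{p}_{2}$ and $\mathfrak{p}_{1}\cap\mathfrak{p}_{2}=(0)$, every vertex lies by prime avoidance in exactly one of $\mathfrak{p}_{1},\mathfrak{p}_{2}$, and one checks $\mathcal{EG}(R)=K_{|V_{1}|,|V_{2}|}$, where $V_{i}$ is the set of non-zero annihilating ideals contained in $\mathfrak{p}_{i}$; both parts are non-empty since $\mathfrak{p}_{i}\in V_{i}$.

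I expect the main obstacle to be the verification, in the reduced two-minimal-primes case, that no two vertices inside the same $\mathfrak{p}_{i}$ are adjacent. The key point is that in a reduced ring $\mathrm{Ann}(L)\cap L=(0)$ for every ideal $L$, so $\mathrm{Ann}(L)\leq_{e}R$ forces $L=(0)$; applying this to $L=IJ$ with $I,J\in V_{1}$ gives $IJ=(0)$, hence $J\subseteq\mathrm{Ann}(I)$, and since $I\not\subseteq\mathfrak{p}_{2}$ one gets $\mathrm{Ann}(I)\subseteq\mathfrak{p}_{2}$, so $J\subseteq\mathfrak{p}_{1}\cap\mathfrak{p}_{2}=(0)$, a contradiction. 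A minor point to flag is the degenerate situation in which $\mathcal{EG}(R)$ has a single vertex (e.g.\ $R=\mathbb{Z}_{4}$), which should be read as a (degenerate) complete bipartite graph.
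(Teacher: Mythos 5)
Your proof is correct and follows essentially the same route as the paper: in the non-reduced case a nonzero nilpotent element gives a principal ideal adjacent to every other vertex, forcing the graph to be a star, and in the reduced case the two-minimal-prime analysis from Lemma \ref{soceee} gives complete bipartiteness. Your only departures are cosmetic: you verify the dominating-vertex fact directly (via a square-zero element) instead of citing Lemma \ref{lem2}, and you get $|{\rm Min}(R)|\leq 2$ from Theorem \ref{colorin}(1) rather than repeating the triangle argument of Lemma \ref{soceee}.
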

\begin{proof}
{Suppose that $\mathcal{EG}(R)$ is bipartite with parts $V_1,V_2$. If there exists  $0\neq x \in {\rm Nil}(R)$, then $Rx$ is a vertex of $\mathcal{EG}(R)$. With out loss of generality, assume that $Rx\in V_1$. Hence $|V_1|=1$ and $Rx$ is adjacent to all vertices contained in $V_2$ and so $\mathcal{EG}(R)$ is star. Thus, we may suppose that ${\rm Nil}(R)=(0)$. By a similar argument to that in the proof of Lemma \ref{soceee}, $|{\rm Min}(R)|=2$ and $\mathcal{EG}(R)$ is  complete bipartite. The converse is trivial.
}
\end{proof}
\begin{lem}\label{triangle}
Let $R$ be a ring. Then $\omega(\mathcal{EG}(R))=2$ if and only if $\chi(\mathcal{EG}(R))=2$.
\end{lem}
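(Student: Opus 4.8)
The plan is to prove the two implications separately; the reverse one is purely formal, while the forward one splits according to whether $R$ is reduced.

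For $\chi(\mathcal{EG}(R))=2\Rightarrow\omega(\mathcal{EG}(R))=2$: since $\omega(G)\le\chi(G)$ for every graph $G$, we get $\omega(\mathcal{EG}(R))\le 2$; and $\chi(\mathcal{EG}(R))=2>1$ forces $\mathcal{EG}(R)$ to contain at least one edge (a simple graph with no edges is $0$- or $1$-colorable), so $\omega(\mathcal{EG}(R))\ge 2$. Equality follows.

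For $\omega(\mathcal{EG}(R))=2\Rightarrow\chi(\mathcal{EG}(R))=2$: the hypothesis gives $\omega(\mathcal{EG}(R))<\infty$, and by the reasoning just used $\chi(\mathcal{EG}(R))\ge 2$, so it suffices to exhibit a proper $2$-coloring. If $R$ is reduced there is nothing to do: since $\omega(\mathcal{EG}(R))<\infty$, Theorem \ref{colorin}$(1)$ already gives $\chi(\mathcal{EG}(R))=\omega(\mathcal{EG}(R))=2$. So assume $R$ is non-reduced and fix $0\ne x\in{\rm Nil}(R)$. Then $Rx$ is finitely generated and contained in ${\rm Nil}(R)$, hence nilpotent by Lemma \ref{lem1}, and $Rx\in A(R)^*$ because a nonzero nilpotent ideal has nonzero annihilator. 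For any vertex $J\ne Rx$ the ideal $(Rx)J$ is contained in $Rx$ and so is again nilpotent; thus $\mathrm{Ann}((Rx)J)\leq_e R$ by Lemma \ref{lem2}, i.e. (by the definition of $\mathcal{EG}(R)$, equivalently part 1 in \cite[Lemma 3.1]{asl}) $Rx$ is adjacent to every other vertex of $\mathcal{EG}(R)$.

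Now $\omega(\mathcal{EG}(R))=2$ forces $\mathcal{EG}(R)\setminus\{Rx\}$ to be an independent set, for two distinct adjacent vertices $I,J$ other than $Rx$ would, together with $Rx$, induce a $K_3$. Hence coloring $Rx$ with one color and every remaining vertex with the other is a proper $2$-coloring, giving $\chi(\mathcal{EG}(R))\le 2$ and therefore $\chi(\mathcal{EG}(R))=2$. I do not anticipate a real obstacle here: the one point requiring attention is that $R$ is not assumed Noetherian, so the converse half of Lemma \ref{lem2} is unavailable — but only the easy direction ``nilpotent $\Rightarrow$ annihilator essential'' is used, and finite generation of $Rx$ is precisely what lets Lemma \ref{lem1} apply. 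One could alternatively run the non-reduced case through Lemma \ref{dobakh} to recognize $\mathcal{EG}(R)$ as a star, but the direct coloring above is shorter.
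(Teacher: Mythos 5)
Your proof is correct, and both directions are handled soundly: the converse is the standard $\omega\le\chi$ inequality plus the observation that $\chi=2$ forces at least one edge, and the forward direction splits on reducedness just as the paper does. The details differ from the paper's in both halves, however. In the reduced case the paper does not invoke Theorem \ref{colorin}; it re-runs the minimal-prime argument from Lemma \ref{soceee} (with triangle-freeness playing the role of $K_{\infty,\infty}$) to conclude that $\mathcal{EG}(R)$ is complete bipartite, whereas you simply quote Theorem \ref{colorin}$(1)$ --- which is legitimate, since that theorem precedes the lemma and its reduced case rests only on the external results \cite{aa} and \cite{asl}, so there is no circularity. In the non-reduced case the paper analyzes the structure of ${\rm Nil}(R)$ (asserting it must be principal, then distinguishing whether it is minimal, yielding a star, or has the unique proper subideal ${\rm Nil}(R)^2$, yielding $K_2$); you instead fix one principal nilpotent ideal $Rx$, use Lemmas \ref{lem1} and \ref{lem2} (only the easy direction, so no Noetherian hypothesis is needed) to see that $Rx$ is adjacent to every other vertex, and let $\omega=2$ force the remaining vertices to form an independent set, giving the $2$-coloring directly. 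Your version is shorter and sidesteps the paper's unproved claim that ${\rm Nil}(R)$ is principal; the paper's version buys the explicit identification of $\mathcal{EG}(R)$ as a star or $K_2$, structural information it reuses in Lemma \ref{mvan} and Theorem \ref{fital}.
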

\begin{proof}
{Suppose that $\omega(\mathcal{EG}(R))=2$. If ${\rm Nil}(R)=(0)$, then by a similar argument to that in the proof of Lemma \ref{soceee},  $\mathcal{EG}(R)$ is  complete bipartite and so $\chi(\mathcal{EG}(R))=2$. Hence let ${\rm Nil}(R)\neq (0)$. Then ${\rm Nil}(R)$ should be a principal ideal. If ${\rm Nil}(R)$ is a minimal ideal, then it is adjacent to every other vertex and since $\omega(\mathcal{EG}(R))=2$, we infer that $\mathcal{EG}(R)$ is star. If ${\rm Nil}(R)$ is  not minimal, then it contains exactly one ideal ${\rm Nil}(R)^2$. Now, $\omega(\mathcal{EG}(R))=2$ implies that $A(R)^*=\{{\rm Nil}(R),{\rm Nil}(R)^2\}$ and so $\mathcal{EG}(R)=K_2$. It is evident $\chi(\mathcal{EG}(R))=2$, if  either $\mathcal{EG}(R)$ is star or $K_2$. The converse is obvious.
}
\end{proof}
\begin{lem}\label{mvan}
Let $R$ be a ring. If $\mathcal{EG}(R)=K_{m,n}$, then $m,n\in \{1,\infty\}$.
\end{lem}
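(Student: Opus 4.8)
The plan is to feed $\mathcal{EG}(R)=K_{m,n}$ into Lemma~\ref{dobakh}: being bipartite, $\mathcal{EG}(R)$ falls under exactly one of the two alternatives isolated in that proof — either ${\rm Nil}(R)\neq(0)$ and $\mathcal{EG}(R)$ is a star (so one part has size $1$), or ${\rm Nil}(R)=(0)$, $R$ is reduced and $|{\rm Min}(R)|=2$. It then suffices to prove, in each alternative, that every part of the bipartition has cardinality $1$ or $\infty$.

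In the reduced case I would split on whether $R$ is decomposable. If $R$ is indecomposable, then the argument of Case~$2$ in the proof of Lemma~\ref{soceee} applies verbatim (a nonzero minimal ideal of a reduced ring is idempotent and hence, by Brauer's Lemma \cite[10.22]{lam}, splits $R$), so $R$ has no minimal ideal, whence ${\rm Soc}(R)=0$ and Lemma~\ref{soceee} gives $\mathcal{EG}(R)=K_{\infty,\infty}$. If $R$ is decomposable, write $R=R_1\times R_2$; the factors are reduced, and if some factor were not a domain then, exactly as in Case~$1$ of the proof of Lemma~\ref{soceee}, one produces a triangle, contradicting bipartiteness; so $R=D_1\times D_2$ with $D_1,D_2$ integral domains. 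Then ${\rm Min}(R)=\{0\times D_2,\ D_1\times 0\}$, and a direct check (as in the converse part of Lemma~\ref{soceee}) gives $\mathcal{EG}(R)=K_{a_1,a_2}$, where $a_i$ is the number of nonzero ideals of $D_i$. Since a domain has exactly one nonzero ideal when it is a field and infinitely many otherwise — the chain $D_ia\supsetneq D_ia^2\supsetneq\cdots$ being strictly descending for a nonzero non-unit $a$ — we conclude $m,n\in\{1,\infty\}$.

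For the star case write $\mathcal{EG}(R)=K_{1,n}$ with ${\rm Nil}(R)\neq(0)$; the part of size $1$ is harmless, so assume $2\le n<\infty$ and seek a contradiction. Then $A(R)^*=V(\mathcal{EG}(R))$ is finite and, as ${\rm Nil}(R)\neq(0)$, $R$ is not a domain; hence $R$ is Artinian (the same reduction used in the proof of Theorem~\ref{colorin}, cf.\ \cite[Theorem 1.4]{beh1}; alternatively this is immediate from the classification of star graphs in \cite{asl}), say $R=R_1\times\cdots\times R_k$ with each $R_i$ Artinian local. If $k=1$, every proper ideal of $R$ is nilpotent, so by Lemma~\ref{lem2} any two vertices of $\mathcal{EG}(R)$ are adjacent, i.e. $\mathcal{EG}(R)$ is complete — impossible on $n+1\ge 3$ vertices. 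If $k\ge 2$, then since ${\rm Nil}(R)\neq(0)$ some factor, say $R_1$, is non-reduced; put $\mathfrak{m}_1={\rm Nil}(R_1)\neq(0)$. The three ideals $\mathfrak{m}_1\times 0\times\cdots\times 0$, $R_1\times 0\times\cdots\times 0$ and $0\times R_2\times 0\times\cdots\times 0$ are distinct vertices, and computing the relevant annihilators — the point being that ${\rm Soc}(R_1)={\rm Ann}_{R_1}(\mathfrak{m}_1)$ is essential in the Artinian local ring $R_1$, so that ${\rm Ann}_{R_1}(\mathfrak{m}_1)\times R_2\times\cdots\times R_k$ is essential in $R$ — shows they form a triangle, contradicting bipartiteness. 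Hence $n\le 1$, completing the proof.

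I expect the genuine obstacle to be the star case, namely ruling out a finite star with at least two leaves: the reduced case is essentially bookkeeping on top of Lemmas~\ref{soceee} and~\ref{dobakh}, whereas here one must first pass to Artinian rings and then separately eliminate the local and the non-local configurations.
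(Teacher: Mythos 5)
Your proof is correct, and it diverges from the paper's argument in both halves. In the reduced case the paper splits according to the number of minimal ideals of $R$ (equivalently ${\rm Soc}(R)$): socle zero gives $K_{\infty,\infty}$ via Lemma~\ref{soceee}, exactly one minimal ideal is asserted (``not hard to see'') to force $R\cong\mathbb{F}\times D$ hence $K_{1,\infty}$, and two minimal ideals force $R\cong\mathbb{F}_1\times\mathbb{F}_2$ hence $K_{1,1}$; you instead split on decomposability, reusing the Brauer-lemma step of Case~2 of Lemma~\ref{soceee} in the indecomposable case and, in the decomposable case, reducing to $D_1\times D_2$ by the triangle of Case~1 and then counting ideals of a domain directly (one if a field, infinitely many otherwise via the chain $D_ia\supsetneq D_ia^2\supsetneq\cdots$), which makes the part sizes $\{1,\infty\}$ explicit rather than inferred from a ring classification. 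In the non-reduced case the paper asserts that ${\rm Nil}(R)$ is principal, distinguishes whether it is a minimal ideal, and leans on the external classification of star graphs \cite[Theorem 3.5]{asl} to get $K_{1,\infty}$ or $K_{1,1}$; you instead rule out a finite star with at least two leaves self-containedly: finiteness of $A(R)^*$ gives $R$ Artinian (same use of \cite[Theorem 1.4]{beh1} as in Theorem~\ref{colorin}), the local case is killed because $\mathcal{EG}(R)$ would be complete on at least three vertices, and the non-local case by the explicit triangle on $\mathfrak{m}_1\times 0\times\cdots$, $R_1\times 0\times\cdots$, $0\times R_2\times 0\times\cdots$, using that ${\rm Ann}_{R_1}(\mathfrak{m}_1)={\rm Soc}(R_1)$ is essential in the Artinian local factor. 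The paper's route is shorter because it outsources the star case to \cite{asl} and skips justification of the claims about ${\rm Nil}(R)$ and the $\mathbb{F}\times D$ structure; yours is longer but self-contained at those points, at the mild cost of invoking the Artinian structure theorem. Both approaches hinge on Lemma~\ref{soceee} and the bipartite reduction of Lemma~\ref{dobakh}, and both establish the stronger conclusion $\mathcal{EG}(R)\in\{K_{1,1},K_{1,\infty},K_{\infty,\infty}\}$ needed later in Theorem~\ref{fital}.
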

\begin{proof}
{If $R$ is reduced, then $|{\rm Min}(R)|=2$ and $\mathcal{EG}(R)$ is  complete bipartite. Moreover, $R$ has at most two minimal ideals. If  ${\rm Soc}(R)=0$, then $\mathcal{EG}(R)=K_{\infty,\infty}$, by Lemma \ref{soceee}. If $R$ has exactly one minimal ideal, then it is not hard to see that $R=\mathbb{F}\times D$, where $\mathbb{F}$ is a field and $D$ is an integral domain. In this case, $\mathcal{EG}(R)=K_{1,\infty}$. If $R$ has two minimal ideals, then $R=\mathbb{F}_1\times \mathbb{F}_2$, where $\mathbb{F}_1,\mathbb{F}_2$ are two fields and so $\mathcal{EG}(R)=K_{1,1}$. If $R$ is not reduced, then  ${\rm Nil}(R)$ is a principal ideal. If ${\rm Nil}(R)$ is a minimal ideal, then it is adjacent to every other vertex and since $\omega(\mathcal{EG}(R))=2$, $\mathcal{EG}(R)$ is star. By \cite[Theorem 3.5]{asl}, $\mathcal{EG}(R)=K_{1,\infty}$, in this case. If ${\rm Nil}(R)$ is  not minimal, then it contains exactly one ideal ${\rm Nil}(R)^2$. Therefore, $A(R)^*=\{{\rm Nil}(R),{\rm Nil}(R)^2\}$ and thus $\mathcal{EG}(R)=K_{1,1}$.
}
\end{proof}

Now, we are in a position to completely characterize rings $R$ whose essential annihilating-ideal graphs have chromatic number $2$.

\begin{thm}\label{fital}
Let $R$ be a ring with at least two non-trivial annihilating ideals. Then the following statements are equivalent:

$(1)$ $\omega(\mathcal{EG}(R))=2$.

$(2)$ $\chi(\mathcal{EG}(R))=2$.

$(3)$ $\mathcal{EG}(R)$ is bipartite.

$(4)$ $\mathcal{EG}(R)$ is complete bipartite.

$(5)$ $\mathcal{EG}(R)\in \{K_{1,1}, K_{1,\infty}, K_{\infty,\infty}\}$.

$(6)$ $R$ is one of the following rings:

$(i)$ $R=\mathbb{F}_1\times \mathbb{F}_2$, where $\mathbb{F}_1,\mathbb{F}_2$ are two fields.

$(ii)$ $R=\mathbb{F}\times D$, where $\mathbb{F}$ is a field and $D$ is an integral domain.

$(iii)$ $R$ is reduced, $|{\rm Min}(R)|=2$ and ${\rm Soc}(R)=0$.

$(iv)$ $R$ is a ring with exactly two non-trivial ideals ${\rm Nil}(R),{\rm Nil}(R)^2$.

$(v)$ $R$ has a minimal $I_1$ such that $I_1$ is not an essential ideal of $R$, $I_1^2=(0)$ and for any non-zero annihilating-ideal $I_2$ of $R$, $Ann(I_2)=I_1$.
\end{thm}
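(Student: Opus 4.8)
The plan is to prove the cycle of implications $(1)\Leftrightarrow(2)$, $(3)\Leftrightarrow(4)$, $(4)\Rightarrow(5)\Rightarrow(6)\Rightarrow(4)$, together with $(2)\Leftrightarrow(3)$, so that all six statements collapse to one equivalence class. Several of these links are already available: Lemma~\ref{triangle} gives $(1)\Leftrightarrow(2)$ outright, and Lemma~\ref{dobakh} gives $(3)\Leftrightarrow(4)$. For $(2)\Rightarrow(3)$ note that $\chi(\mathcal{EG}(R))=2$ forces the graph to be $2$-colorable, hence bipartite by definition; the reverse $(3)\Rightarrow(2)$ is immediate since a bipartite graph with at least one edge (which we have, as $\mathcal{EG}(R)$ is connected with at least two vertices) has chromatic number exactly $2$.

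The substantive work is the triangle $(4)\Rightarrow(5)\Rightarrow(6)\Rightarrow(4)$. For $(4)\Rightarrow(5)$, assume $\mathcal{EG}(R)=K_{m,n}$; Lemma~\ref{mvan} immediately gives $m,n\in\{1,\infty\}$, so $\mathcal{EG}(R)\in\{K_{1,1},K_{1,\infty},K_{\infty,\infty}\}$. For $(5)\Rightarrow(6)$ I would treat the three graphs case by case. If $\mathcal{EG}(R)=K_{\infty,\infty}$, Lemma~\ref{soceee} yields exactly $(iii)$. If $\mathcal{EG}(R)=K_{1,1}$, then $A(R)^*$ has exactly two elements; by \cite[Theorem~3.6]{asl} (Artinian, via \cite[Theorem~1.4]{beh1}) and the structure theory one gets either $R=\mathbb{F}_1\times\mathbb{F}_2$ (case $(i)$) or $R$ non-reduced local with $A(R)^*=\{{\rm Nil}(R),{\rm Nil}(R)^2\}$ (case $(iv)$). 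If $\mathcal{EG}(R)=K_{1,\infty}$, i.e. $\mathcal{EG}(R)$ is a star, then the center is a vertex $I_1$ adjacent to everything; using Lemma~\ref{lem2} one argues $I_1$ is either a nilpotent/minimal-type ideal or, in the reduced case, $R=\mathbb{F}\times D$ (case $(ii)$). The delicate subcase is where $R$ is non-reduced, ${\rm Nil}(R)$ is minimal but $\mathcal{EG}(R)$ is a star that is \emph{not} covered by $(ii)$ or $(iv)$: here I expect to extract precisely the pathological description $(v)$ — a minimal ideal $I_1$ with $I_1^2=(0)$, $I_1\not\leq_e R$, and $Ann(I_2)=I_1$ for every $I_2\in A(R)^*$ — by examining which vertices can be adjacent to the star's center and showing all their annihilators coincide with $I_1$. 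Finally $(6)\Rightarrow(4)$ is a direct verification in each of the five cases that $\mathcal{EG}(R)$ is complete bipartite: $(i)$ gives $K_{1,1}$; $(ii)$ gives $K_{1,\infty}$; $(iii)$ gives $K_{\infty,\infty}$ by Lemma~\ref{soceee}; $(iv)$ gives $K_{1,1}$; and for $(v)$ one checks directly from the definition of $\mathcal{EG}(R)$ that the center $I_1$ is adjacent to all other vertices while no two non-central vertices $I,J$ are adjacent (since $Ann(IJ)\supseteq Ann(I)\cap Ann(J)$ would have to be essential, but one shows $Ann(IJ)=I_1$ which is not essential by hypothesis), yielding a star $K_{1,\infty}$.

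The main obstacle I anticipate is isolating and correctly stating case $(v)$: the earlier lemmas (especially Lemma~\ref{mvan}) handle the ``generic'' stars via \cite[Theorem~3.5]{asl}, but that theorem's hypotheses may not capture every non-reduced ring with a minimal nilradical whose essential annihilating-ideal graph is a star, and Remark~\ref{rem} flags that the socle behaves subtly. So the careful point is to show that whenever $\mathcal{EG}(R)=K_{1,\infty}$ and $R$ falls outside $(ii)$, the center must be a minimal ideal $I_1$ with $I_1^2=(0)$ and $I_1\not\leq_e R$, and that adjacency of $I_1$ to every vertex together with non-adjacency among the rest forces $Ann(I_2)=I_1$ for all $I_2\in A(R)^*$; conversely that these conditions are enough to rebuild $K_{1,\infty}$. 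I would also double-check the boundary hypothesis ``at least two non-trivial annihilating ideals'' is used exactly where it is needed — namely to rule out degenerate graphs on one vertex — and that it is consistent with all five rings in $(6)$.
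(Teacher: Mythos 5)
Your proposal is correct and follows essentially the same route as the paper: the paper's proof is a one-line appeal to Lemmas \ref{soceee}, \ref{dobakh}, \ref{triangle}, \ref{mvan} together with \cite[Theorem 3.5]{asl}, and you assemble exactly those ingredients, with the star cases $(ii)$ and $(v)$ (the part you flag as delicate) being precisely what \cite[Theorem 3.5]{asl} supplies.
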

\begin{proof}
{The result follows from Lemmas \ref{soceee}, \ref{dobakh}, \ref{triangle}, \ref{mvan} and \cite[Theorem 3.5]{asl}.
}
\end{proof}

The next result computes twin-free clique number of $\mathcal{EG}(R)$, when $R$ is Artinian.

\begin{thm}\label{twin}
Let $R$ be an Artinian ring. Then the following statements hold:

$(1)$ If $R$ is reduced, then

$(i)$ $\overline{\omega}(\mathcal{EG}(R))=1$, if $|{\rm Max}(R)|=2$.

$(ii)$ $\overline{\omega}(\mathcal{EG}(R))=|{\rm Max}(R)|$, if $|{\rm Max}(R)|>2$.

$(2)$ If $R$ is non-reduced, then

$(i)$ $\overline{\omega}(\mathcal{EG}(R))=1$, if $|{\rm Max}(R)|=1$.

$(ii)$ $\overline{\omega}(\mathcal{EG}(R))=|{\rm Max}(R)|+1$.
\end{thm}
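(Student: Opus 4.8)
The plan is to reduce everything to the structure theorem for Artinian rings, $R \cong R_1 \times \cdots \times R_k$ with each $R_i$ Artinian local, and then analyze the closed neighborhoods $N[I]$ of the vertices of $\mathcal{EG}(R)$ directly. For an ideal $I = I_1 \times \cdots \times I_k$, adjacency of $I$ and $J$ depends only on whether $\mathrm{Ann}(IJ) \leq_e R$; since in a finite product $\mathrm{Ann}$ and essentiality both decompose coordinatewise, $\mathrm{Ann}(IJ) \leq_e R$ iff in each coordinate $i$ either $(I_jJ_j)$ has essential annihilator in $R_i$ or the relevant socle condition holds. The key point, already used implicitly via Lemma \ref{lem2} and the reduced case, is that when $R$ is reduced each $R_i$ is a field, so $\mathcal{EG}(R) \cong \mathcal{EG}(\mathbb{F}_1 \times \cdots \times \mathbb{F}_k)$, whose structure is completely understood (it is the ideal-essential graph on the $2^k - 2$ proper nonzero ideals, with adjacency governed by disjointness of supports). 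For the non-reduced case, at least one $R_i$ is a local ring that is not a field, and the vertices $I$ with $I \subseteq \mathrm{Nil}(R)$ form a set adjacent to everything (Lemma \ref{lem2}), which will contribute the extra ``$+1$''.

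First I would handle part (1), the reduced case. Here $R = \mathbb{F}_1 \times \cdots \times \mathbb{F}_k$ with $k = |{\rm Max}(R)|$. A nonzero proper ideal $I$ is determined by its support $S = \{i : I_i = \mathbb{F}_i\} \subsetneq \{1,\dots,k\}$, $S \neq \varnothing$, and $I - J$ iff $S_I \cap S_J = \varnothing$ (this is exactly the complete-multipartite-type structure underlying \cite[Theorem 2.5]{asl}). Two vertices are true twins iff they have the same support, so there are no true twins at all among these vertices — every support class is a singleton — hence $\overline{\omega}(\mathcal{EG}(R)) = \omega(\mathcal{EG}(R))$. By Theorem \ref{colorin}(1), $\omega(\mathcal{EG}(R)) = |{\rm Min}(R)| = k$ when $k > 2$, giving (ii). When $k = 2$ we have $\mathcal{EG}(R) = K_{1,1}$, a single edge, whose two vertices $\mathbb{F}_1 \times 0$ and $0 \times \mathbb{F}_2$ satisfy $N[\cdot]$ equal to the whole vertex set, so they ARE true twins; thus no twin-free clique has size $2$ and $\overline{\omega} = 1$, giving (i). (I should double-check the edge case $k=2$: $|{\rm Max}(R)| = 2$ forces exactly these two vertices, and they are mutually adjacent with identical closed neighborhoods, so indeed $\overline{\omega} = 1$.)

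Next, part (2), $R$ non-reduced Artinian. Write $\mathcal{A} = \{I \in A(R)^* : I \subseteq \mathrm{Nil}(R)\}$; by Lemma \ref{lem2} every element of $\mathcal{A}$ is adjacent to every other vertex, so $\mathcal{A}$ together with one further vertex outside $\mathrm{Nil}(R)$ always yields a clique. For (i), $|{\rm Max}(R)| = 1$ means $R$ is Artinian local, so by \cite[Lemma 3.2]{asl} every vertex is in $\mathcal{A}$ and $\mathcal{EG}(R)$ is complete; in a complete graph all vertices have the same closed neighborhood, so any two are true twins and $\overline{\omega}(\mathcal{EG}(R)) = 1$. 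For the general bound in (ii): take $R = R_1 \times \cdots \times R_k$, relabel so $R_1$ is non-reduced local; pick a single nonzero proper ideal $N \subseteq \mathrm{Nil}(R)$ (it is universally adjacent), and pick the ``coordinate'' ideals $\hat{e}_i$ (the ideal that is $R_j$ in all coordinates $j \neq i$ and $0$ in coordinate $i$) for $i = 1, \dots, k$; pairwise these $\hat{e}_i$ have complementary-in-pairs supports so are mutually adjacent, and each is adjacent to $N$, and one checks $N[N] \neq N[\hat{e}_i]$ and $N[\hat{e}_i] \neq N[\hat{e}_j]$ using a witness vertex supported inside the symmetric difference of supports — this gives a twin-free clique of size $k+1$. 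For the reverse inequality $\overline{\omega}(\mathcal{EG}(R)) \leq k+1$: given any twin-free clique $X$, at most one element of $X$ can lie in $\mathcal{A}$ (any two elements of $\mathcal{A}$ with $N[\cdot]$ equal to the full vertex set are true twins — here I should verify that in a non-reduced Artinian ring every ideal inside $\mathrm{Nil}(R)$ is in fact universally adjacent AND that distinct such ideals have equal closed neighborhood, which follows since each such vertex is adjacent to all others), and the elements of $X \setminus \mathcal{A}$ map injectively (by the twin-free condition) to an independent-in-disjointness family of nonempty proper supports whose pairwise intersections are empty, forcing $|X \setminus \mathcal{A}| \leq k$; combining gives $|X| \leq k+1$.

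The main obstacle I anticipate is the converse inequality in (2)(ii), specifically pinning down exactly when two vertices fail to be true twins in the non-reduced setting: adjacency of $I$ and $J$ is NOT purely a disjoint-support condition once some $R_i$ has nilpotents (it can also be satisfied via a coordinate where $I_jJ_j$ is nilpotent in $R_i$ and $\mathrm{Ann}$ is essential there), so I must show carefully that after quotienting out the ``socle-trivial'' directions the only obstruction to a large twin-free clique is still the support-disjointness constraint among the $k$ local factors, plus the single extra vertex from $\mathcal{A}$. I would isolate this as a short lemma: in an Artinian ring, $N[I] = N[J]$ for vertices $I,J \notin \mathcal{A}$ iff $I$ and $J$ have the same ``essential support'' $\{i : R_i \text{ local non-field or } I_i \neq 0,\, \mathrm{Soc}(R_i) \subseteq I_i\}$ — and then the counting argument above closes the proof. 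The edge cases where some $R_i = \mathbb{F}$ is a field (so its coordinate contributes a genuine socle/minimal-ideal subtlety) should be absorbed into this lemma rather than treated ad hoc.
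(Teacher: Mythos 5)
Your overall route is the same as the paper's: decompose the Artinian ring as $R\cong R_1\times\cdots\times R_k$ with each $R_i$ local, observe via Lemma \ref{lem2} that every vertex inside ${\rm Nil}(R)$ is adjacent to everything (so contributes at most one, and hopefully exactly one, member of a twin-free clique), and build the rest of the clique out of ``coordinate'' ideals; the reduced case and the local case are handled exactly as in the paper. The problems are in part (2)(ii). First, your clique is built from the wrong ideals: $\hat{e}_i$ as you define it (equal to $R_j$ for $j\neq i$ and $0$ in coordinate $i$) gives $\hat{e}_i\hat{e}_j$ with a full coordinate whenever $k\geq 3$, so $\mathrm{Ann}(\hat{e}_i\hat{e}_j)$ is not essential and the $\hat{e}_i$ are pairwise \emph{non}-adjacent; the vertices you need are $J_i=(0)\times\cdots\times R_i\times\cdots\times(0)$ (the paper's set $\mathcal{B}$), whose pairwise products are zero. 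Second, and more seriously, the twin-freeness check you defer (``one checks $N[N]\neq N[\hat{e}_i]$ using a witness vertex'') is not merely unverified, it can fail: if $k=2$ and one factor is a field, e.g.\ $R=\mathbb{Z}_4\times\mathbb{Z}_2$, then $J_1=\mathbb{Z}_4\times(0)$ is adjacent to every other vertex, so $N[J_1]=N[N]=V(\mathcal{EG}(R))$ and the proposed clique contains true twins; a direct computation gives $\overline{\omega}(\mathcal{EG}(\mathbb{Z}_4\times\mathbb{Z}_2))=2$, not $|{\rm Max}(R)|+1=3$. So the lower bound cannot be completed in that case (the paper's own proof has the same unguarded step, asserting $\mathcal{B}\cup\{I\}$ is twin-free without comparing $N[J_i]$ with $N[I]$; the witness exists only when the corresponding class $\mathcal{B}_i$ is nonempty, which holds for $k\geq 3$ but not for $k=2$ with a field factor).

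For the upper bound you invoke pairwise disjointness of supports of the members of $X\setminus\mathcal{A}$, but in the non-reduced setting adjacency is not governed by the nonzero-coordinate support: by Lemma \ref{lem2}, two vertices $I,J$ are adjacent iff $IJ$ is nilpotent, i.e.\ iff the sets $F(I)=\{i: I_i=R_i\}$ and $F(J)$ are disjoint (every proper ideal of an Artinian local factor is nilpotent). With this invariant the counting is immediate: any clique meets $\{v: F(v)\ni i,\ i \text{ minimal}\}$-type classes in at most one vertex each, giving $|X\setminus\mathcal{A}|\leq k$, and at most one vertex of $\mathcal{A}$ can occur since all of them have closed neighborhood $V$. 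This replaces your unproved ``essential support'' lemma and is exactly what the paper does with its independent sets $\mathcal{B}_1,\ldots,\mathcal{B}_n$ keyed to the first full coordinate. So the skeleton of your argument matches the paper, but as written the clique construction is incorrect for $k\geq3$, the upper bound rests on the wrong notion of support, and the twin-freeness of the extra nilpotent vertex is a genuine gap rather than a routine verification.
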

\begin{proof}
{If $R$ is a non-reduced local ring or reduced with exactly two maximal ideals, then the result follows from \cite[Lemma 3.2]{asl}, as $\mathcal{EG}(R))$ is complete. Hence, we may suppose that there exists a positive integer $n\geq 2$ such that $R=R_1\times\cdots\times R_n$ and every $R_i$ is an Artinian local ring. Consider the following partition for $V(\mathcal{EG}(R))$:

$\mathcal{A}=\{I\in A(R)^*|\,\, I\subseteq {\rm Nil}(R)\}$,

$\mathcal{B}=\{J_1,\ldots,J_n\}$, where $J_i=(0)\times\cdots\times(0)\times R_i\times (0) \times\cdots\times (0)$,

$\mathcal{B}_1=\{(R_1,I_2,\ldots,I_n)\in A(R)^*|\,\, I_i\leq R_i, 2\leq i \leq n\}\setminus J_1$,

$\mathcal{B}_2=\{(I_1,R_2,I_3,\ldots,I_n)\in A(R)^*|\,\, I_1\neq R_1, I_i\leq R_i, 3\leq i \leq n\}\setminus J_2$,

.

.

.

$\mathcal{B}_n=\{(I_1,I_2,\ldots,I_{n-1}, R_n)\in A(R)^*|\,\, I_i\neq R_i, 1\leq i \leq n-1\}\setminus J_n$.

We first note that $\mathcal{EG}(R)[\mathcal{B}]$ is a complete graph, as $J_iJ_j=(0)$, for all $1\leq i\neq j \leq n$. Moreover, every $\mathcal{B}_i$ is an independent set, for $1\leq i \leq n$. To see this, and with no loss of generality, let $I,J \in \mathcal{B}_1$. Thus $IJ\in \mathcal{B}_1\cup \{J_1\}$ and so $IJ$ is not nilpotent. Hence, by Lemma \ref{lem2}, $I,J$ are not adjacent and $\mathcal{EG}(R)[\mathcal{B}_i]$ is totally disconnected, for $1\leq i \leq n$. Next, we show that $\mathcal{B}$ contains no true twins. Let $J_i,J_j$ be two arbitrary vertices of $\mathcal{B}$. With no loss of generality, suppose that $i<j$. Then $(R_1,\ldots,R_{i-1},(0),R_{i+1},\ldots,R_j,\ldots,R_n)\in N(J_i)\setminus N(J_j)$, as desired. We continue the proof in two following cases:

$(1)$ $R$ is reduced and $|{\rm Max}(R)|>2$. Then $\mathcal{A}=\varnothing$ and thus, $\overline{\omega}(\mathcal{EG}(R))=|\mathcal{B}|=|{\rm Max}(R)|$.

$(2)$ $R$ is non-reduced and $|{\rm Max}(R)|\geq 2$. By Lemma \ref{lem1} and \cite[Lemma 3.1]{asl}, $\mathcal{EG}(R)[\mathcal{A}]$ is a complete graph and every vertex in $\mathcal{A}$ is adjacent to every other vertex.  Hence, $\mathcal{B}\cup \{I\}$ is a twin free clique of maximum cardinality, for every $I\in \mathcal{A}$. Therefore, $\overline{\omega}(\mathcal{EG}(R))=|{\rm Max}(R)|+1$.
}
\end{proof}

The rest of this paper is devoted to the edge chromatic number of $\mathcal{EG}(R)$.

\begin{remark}
{\rm By Vizing's Theorem (see \cite[p. 16]{yap}),  simple undirect graphs are partitioned into two classes Class 1 and Class 2. If a graph $G$ contains a vertex of infinite degree, then clearly, it is  Class 1. Hence to determine whether essential annihilating-ideal graphs are Class $1$ or Class $2$, we may suppose that every vertex of $\mathcal{EG}(R)$ has a finite degree. Then, by \cite[Theorem 1.4]{beh1}, $R$ has finitely many ideals. Such rings are obviously Artinian. Since Artinian local rings have complete essential annihilating-ideal graphs, both of  Class $1$  and Class $2$  appear among $\mathcal{EG}(R)$'s. Indeed, if $R$ is an Artinian local ring containing an even number of non-trivial ideals, then $\mathcal{EG}(R)$ is Class $1$, but if it contains an odd number of non-trivial ideals, then $\mathcal{EG}(R)$ is Class $2$. Hence suppose that $R$ is  non-local.  Then
there exists a positive integer $n\geq 2$ such that $R\cong
R_1\times\cdots\times R_n$, where every $R_i$ is an Artinian
local ring. If every $R_i$ has $t_i$ proper ideals, then $V(\mathcal{EG}(R))=\prod_{i=1}^n(t_i+1)-2$. If $R$ is non-reduced, then  ${\rm Nil}(R)$ is adjacent to every other vertex. If there exists $1\leq i \leq n$ such that $t_i$ is odd, then $\mathcal{EG}(R)$ is Class $1$, see \cite[p. 45]{plant}. Thus two cases remain:

$(i)$ $R$ is reduced.

$(ii)$ $R$ is a non-reduced ring and $|V(\mathcal{EG}(R))|$ is odd.}
\end{remark}

The following lemma is a criteria for a graph to be Class 1.

\begin{lem}\label{firstkindgraph} {\rm(\cite[Corollary 5.4]{beineke})}
Let $G$ be a simple graph. Suppose that for every
vertex $u$ of maximum degree, there exists an edge $\{u,v\}$ such
that $\Delta(G)-deg(v)+2$ is more than the number of vertices with
maximum degree in $G$. Then $\chi'(G)=\Delta(G)$.
\end{lem}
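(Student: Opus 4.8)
The plan is to argue by contradiction using Vizing's Adjacency Lemma. Assume $G$ is Class 2, so $\chi'(G) = \Delta + 1$ with $\Delta := \Delta(G)$, and let $p$ denote the number of vertices of degree $\Delta$ in $G$. I would first replace $G$ by a minimal "bad" subgraph: among all subgraphs $H'$ of $G$ with $\chi'(H') = \Delta + 1$, take one $H$ with the fewest edges and, among those, the fewest vertices. By a standard argument $H$ is connected, has $\Delta(H) = \Delta$ (because $\Delta(H) \ge \chi'(H) - 1 = \Delta$ and $\Delta(H) \le \Delta(G)$), and is $\Delta$-critical, i.e. $\chi'(H - e) = \Delta$ for every edge $e$ of $H$.

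Next I would observe that passing to $H$ leaves the maximum-degree vertices intact: if $\deg_H(x) = \Delta$ then $\Delta = \deg_H(x) \le \deg_G(x) \le \Delta$, so $\deg_G(x) = \Delta = \deg_H(x)$. Consequently the set $V_\Delta(H)$ of degree-$\Delta$ vertices of $H$ is contained in that of $G$, whence $|V_\Delta(H)| \le p$; and, since no edge at such a vertex $x$ was deleted, the whole $G$-neighbourhood of $x$ is retained in $H$.

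Now I would take any $u \in V_\Delta(H)$ (this set is nonempty since $\Delta(H) = \Delta$). By hypothesis there is an edge $\{u,v\}$ of $G$ with $\Delta - \deg_G(v) + 2 > p$; by the previous step this edge lies in $H$, and $\deg_H(v) \le \deg_G(v)$ gives $\Delta - \deg_H(v) + 1 \ge \Delta - \deg_G(v) + 1 \ge p$. On the other hand, Vizing's Adjacency Lemma applied to the critical graph $H$ and the edge $\{u,v\}$ yields that $u$ is adjacent in $H$ to at least $\Delta - \deg_H(v) + 1$ vertices of degree $\Delta$ in $H$; these all lie in $V_\Delta(H) \setminus \{u\}$, of which there are at most $|V_\Delta(H)| - 1 \le p - 1$. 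This contradicts $\Delta - \deg_H(v) + 1 \ge p$, so $G$ must be Class 1, i.e. $\chi'(G) = \Delta(G)$.

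The routine ingredients are the reduction to a connected $\Delta$-critical subgraph and the precise statement of Vizing's Adjacency Lemma (for an edge $uv$ of a critical graph, $u$ has at least $\Delta - \deg(v) + 1$ neighbours of maximum degree). The step I expect to be the crux is transporting the "good edge" guaranteed by the hypothesis from $G$ into the critical subgraph $H$, where the adjacency lemma is available --- this is exactly what the observation that a degree-$\Delta$ vertex of $H$ keeps all of its incident edges accomplishes.
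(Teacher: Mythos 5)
Your proof is correct. Note, though, that the paper does not prove this lemma at all: it is imported verbatim as Corollary 5.4 of Beineke--Wilson, so there is no in-paper argument to compare against. What you have written is essentially the standard derivation of that corollary: pass to an edge-minimal (hence $\Delta$-critical) subgraph $H$ with $\chi'(H)=\Delta+1$, observe that $\Delta(H)=\Delta(G)$ and that any vertex of degree $\Delta$ in $H$ retains all of its $G$-incident edges (this is indeed the crux, since it transports the hypothesized ``good edge'' $\{u,v\}$ into $H$ and gives $\deg_H(v)\le\deg_G(v)$ and $|V_\Delta(H)|\le p$), and then contradict Vizing's Adjacency Lemma, using the integrality step $\Delta-\deg_G(v)+2>p\Rightarrow\Delta-\deg_H(v)+1\ge p$ against the bound $|V_\Delta(H)\setminus\{u\}|\le p-1$. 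The only ingredient taken on faith is Vizing's Adjacency Lemma itself, which is a standard result and is stated accurately, so this is a legitimate self-contained justification of a fact the paper simply cites.
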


Since every (non-local) Artinian reduced ring is a direct product of finitely many fields, we state the following result.

\begin{thm}\label{edgechromaticnumber}
Let $R= \mathbb{F}_1\times\cdots\times \mathbb{F}_n$, where every $\mathbb{F}_i$ is a
field and $n\geq 2$. Then $\mathcal{EG}(R)$ is Class $1$.
\end{thm}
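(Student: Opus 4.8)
The plan is to show directly that $\mathcal{EG}(R)$ has an edge coloring using exactly $\Delta(\mathcal{EG}(R))$ colors by verifying the hypothesis of Lemma~\ref{firstkindgraph}. First I would describe the graph explicitly. Since $R=\mathbb{F}_1\times\cdots\times\mathbb{F}_n$ with each $\mathbb{F}_i$ a field, every ideal of $R$ has the form $I_S=\prod_{i\in S}\mathbb{F}_i$ for some subset $S\subseteq\{1,\ldots,n\}$, and $R$ is reduced. The vertex set $V(\mathcal{EG}(R))$ consists of all proper non-zero ideals, i.e.\ all $I_S$ with $\varnothing\neq S\neq\{1,\ldots,n\}$, so $|V(\mathcal{EG}(R))|=2^n-2$. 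By Theorem~\ref{colcol}(1) (or directly), two vertices $I_S,I_T$ are adjacent iff $I_SI_T=I_{S\cap T}$ has essential annihilator, which since $R$ is reduced and $|{\rm Min}(R)|=n$ happens iff $S\cap T=\varnothing$. Thus $\mathcal{EG}(R)$ is the ``disjointness graph'' on the proper non-empty subsets of an $n$-set. Hence $\deg(I_S)=2^{\,n-|S|}-1$, and the maximum degree $\Delta=2^{\,n-1}-1$ is attained exactly at the $n$ singleton ideals $J_i=I_{\{i\}}$.

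Next I would apply Lemma~\ref{firstkindgraph}. Take any vertex $u$ of maximum degree; it is a singleton $J_i$. I need an edge $\{J_i,v\}$ with $\Delta-\deg(v)+2>$ (number of maximum-degree vertices $)=n$. Choose $v=I_{T}$ where $T=\{1,\ldots,n\}\setminus\{i,k\}$ for some $k\neq i$ (possible since $n\geq 2$; when $n=2$ take $T=\varnothing$, which is not allowed, so the case $n=2$ must be handled separately — see below). For $n\geq 3$ this $v$ is a valid vertex, it is adjacent to $J_i$ since $\{i\}\cap T=\varnothing$, and $\deg(v)=2^{\,n-(n-2)}-1=3$. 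Then $\Delta-\deg(v)+2=(2^{\,n-1}-1)-3+2=2^{\,n-1}-2$, and I must check $2^{\,n-1}-2>n$, which holds for all $n\geq 4$, and also for $n=3$ where it reads $2>3$ — false. So the argument via this particular $v$ works cleanly only for $n\geq 4$, and the small cases $n=2,3$ need a separate (finite, direct) treatment.

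For $n=2$: $R=\mathbb{F}_1\times\mathbb{F}_2$ gives $\mathcal{EG}(R)=K_{1,1}$, a single edge, trivially Class~$1$. For $n=3$: $\mathcal{EG}(R)$ has $2^3-2=6$ vertices — the three singletons $J_1,J_2,J_3$ (each of degree $3$) and the three ``doubletons'' $I_{\{1,2\}},I_{\{1,3\}},I_{\{2,3\}}$ (each of degree $1$), with $J_i$ adjacent to the doubleton avoiding $i$ and to the two other singletons; this is exactly the graph $K_4$ minus a perfect matching with three pendant edges attached, or more simply: the three singletons form a triangle and each carries one pendant. This has $\Delta=3$ and is easily $3$-edge-colored by hand (color the triangle with $1,2,3$ and each pendant with the color missing at its endpoint). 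So it is Class~$1$. I would then note that the case distinction $n=2$, $n=3$, $n\geq 4$ covers everything and conclude $\mathcal{EG}(R)$ is Class~$1$ in all cases.

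The main obstacle is not the asymptotic case — Lemma~\ref{firstkindgraph} disposes of $n\geq 4$ essentially for free once the degree sequence is computed — but rather making sure the small cases $n=2,3$ (where the clean choice of $v$ fails because $2^{n-1}-2\not>n$) are correctly and explicitly handled, and being careful that the vertex $v$ chosen in the general argument is genuinely a vertex of the graph (neither $\varnothing$ nor the whole index set). An alternative that avoids the case split entirely would be to exhibit an explicit proper edge coloring of the disjointness graph with $2^{n-1}-1$ colors, e.g.\ by pairing complementary-type structure, but the Lemma~\ref{firstkindgraph} route with a short check for $n\le 3$ is shorter and is what I would write up.
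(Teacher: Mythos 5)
Your proposal is correct and follows essentially the same route as the paper: describe $\mathcal{EG}(R)$ as the disjointness graph on proper non-empty subsets, compute $\Delta(\mathcal{EG}(R))=2^{n-1}-1$ with the $n$ singleton ideals as the maximum-degree vertices, and apply Lemma~\ref{firstkindgraph}. The only difference is the choice of the neighbor $v$: the paper takes $v=(0)\times\mathbb{F}_2\times\cdots\times\mathbb{F}_n$, the degree-one complement of $u$, so that $\Delta(\mathcal{EG}(R))-\deg(v)+2=2^{n-1}>n$ for every $n\geq 3$, which removes the need for your separate hand-coloring at $n=3$ (your treatment of that case, triangle plus pendants, is nevertheless correct; only the passing description ``$K_4$ minus a perfect matching'' is inaccurate and harmless).
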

\begin{proof}
{If $n=2$, then there is nothing to prove, as $\mathcal{EG}(R)=K_2$. Thus, we may assume that $n\geq 3$. It is not hard to see that every element of $\mathcal{A}=\{(0)\times\cdots\times (0)\times
F_i\times (0)\times\cdots\times (0)\,|\,1\leq i\leq n\}$ is a vertex of maximum degree and $|\mathcal{A}|=n$. Also,
 $\Delta(\mathcal{EG}(R))=2^{n-1}-1$.
Let $u$ be a vertex of maximum degree in
$\mathcal{EG}(R)$. With out loss of generality, suppose
that $u=\mathbb{F}_1\times(0)\times\cdots\times (0)$. Then
$\Delta(\mathcal{EG}(R))-d(v)+2=2^{n-1}>n$, for  the vertex $v=(0)\times \mathbb{F}_2\times\cdots\times \mathbb{F}_n$.  Lemma
\ref{firstkindgraph} now implies that $\mathcal{EG}(R)$ is Class $1$.
}
\end{proof}

We proceed with non-reduced ring case.

\begin{thm}\label{clas1}
Let $R= R_1\times\cdots\times R_n$, where every $R_i$ is an
Artinian local ring, $n\geq 2$ and every $R_i$ has exactly $t\geq 2$ proper ideals. If $t$ is a fixed even number, then $\mathcal{EG}(R)$ is Class $1$, for enough large $n$.
\end{thm}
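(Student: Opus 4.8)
The plan is to verify the hypothesis of Lemma~\ref{firstkindgraph}, so I would first pin down the vertex set, the adjacency relation, the maximum degree, and the number of vertices of maximum degree of $\mathcal{EG}(R)$. Since each $R_{i}$ has $t\geq 2$ proper ideals, $R_{i}$ is an Artinian local ring that is not a field; hence $R$ is Artinian and non-reduced, it has exactly $(t+1)^{n}$ ideals, and by the remark preceding Lemma~\ref{firstkindgraph} we have $|V(\mathcal{EG}(R))|=(t+1)^{n}-2$, the vertices being the nonzero proper ideals of $R$. Writing each ideal as $I=I_{1}\times\cdots\times I_{n}$ and invoking Lemmas~\ref{lem1} and~\ref{lem2} (legitimate because $R$ is Noetherian), two distinct vertices $I,J$ are adjacent precisely when $IJ$ is nilpotent, that is, when $I_{i}J_{i}\subseteq\mathfrak{m}_{i}$ for every $i$ (here $\mathfrak{m}_{i}$ denotes the maximal ideal of $R_{i}$); equivalently, the ``full supports'' $\mathrm{supp}(I)=\{\,i:I_{i}=R_{i}\,\}$ and $\mathrm{supp}(J)$ are disjoint. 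If $t$ is odd, then $\mathcal{EG}(R)$ is already Class~$1$ by that remark, so the genuinely new case is $t$ even; nevertheless the argument below works for every $t\geq 2$.

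From the adjacency criterion, the ideals $I$ with $\mathrm{supp}(I)=\varnothing$, that is, the members of $\mathcal{A}=\{\,I\in A(R)^{*}:I\subseteq{\rm Nil}(R)\,\}$, are adjacent to every other vertex (which also follows from \cite[Lemma~3.1]{asl}), hence each has degree $(t+1)^{n}-3$; on the other hand, a vertex with $|\mathrm{supp}(I)|=s\geq 1$ has degree $t^{s}(t+1)^{n-s}-1$, which is strictly smaller since $(t+1)^{n-1}>2$. Therefore $\Delta(\mathcal{EG}(R))=(t+1)^{n}-3$, the vertices of maximum degree are exactly those in $\mathcal{A}$, and counting the ideals contained in ${\rm Nil}(R)=\mathfrak{m}_{1}\times\cdots\times\mathfrak{m}_{n}$ shows $|\mathcal{A}|=t^{n}-1$. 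I would then fix the particular vertex $v=R_{1}\times\cdots\times R_{n-1}\times(0)$, whose full support has size $n-1$, so that $\deg(v)=t^{n-1}(t+1)-1$.

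To finish, let $u$ be an arbitrary vertex of maximum degree; then $u\in\mathcal{A}$, so $\{u,v\}$ is an edge of $\mathcal{EG}(R)$, and
\[
\Delta(\mathcal{EG}(R))-\deg(v)+2=\bigl((t+1)^{n}-3\bigr)-\bigl(t^{n-1}(t+1)-1\bigr)+2=(t+1)^{n}-(t+1)t^{n-1}.
\]
Comparing this with the number $t^{n}-1$ of maximum-degree vertices, the inequality required by Lemma~\ref{firstkindgraph} becomes $(t+1)^{n}-(t+1)t^{n-1}>t^{n}-1$, i.e.\ $(t+1)^{n}>2t^{n}+t^{n-1}-1$, equivalently $(1+1/t)^{n}>2+1/t-1/t^{n}$. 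Since the left-hand side tends to infinity while the right-hand side stays below $3$, this holds for every $n$ exceeding a bound that depends only on the fixed number $t$; for such $n$, Lemma~\ref{firstkindgraph} yields $\chi'(\mathcal{EG}(R))=\Delta(\mathcal{EG}(R))$, that is, $\mathcal{EG}(R)$ is Class~$1$.

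All of these ingredients are elementary; the step I expect to demand the most care is the degree bookkeeping, namely establishing the formula $\deg(I)=t^{s}(t+1)^{n-s}-1$ for a vertex of full support size $s\geq 1$ — remembering to remove $(0)$, $R$ and $I$ itself from the candidate neighbours — and thereby confirming that the maximum-degree vertices are precisely $\mathcal{A}$ with $|\mathcal{A}|=t^{n}-1$. Once that is in place, the closing inequality $(t+1)^{n}>2t^{n}+t^{n-1}-1$ is routine, and its failure for small $n$ is exactly why only ``enough large $n$'' is claimed.
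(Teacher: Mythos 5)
Your proposal is correct and follows essentially the same route as the paper: the same set $\mathcal{A}$ of nilpotent-contained ideals as the maximum-degree vertices with $\Delta(\mathcal{EG}(R))=(t+1)^{n}-3$ and $|\mathcal{A}|=t^{n}-1$, the same choice $v=R_{1}\times\cdots\times R_{n-1}\times(0)$ with $\deg(v)=(t+1)t^{n-1}-1$, and the same application of Lemma~\ref{firstkindgraph} reducing everything to the inequality $(t+1)^{n}-2t^{n}-t^{n-1}+1>0$ for large $n$. Your support-based bookkeeping just makes explicit the degree computations that the paper states without detail.
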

\begin{proof}
{Let $\mathcal{A}=\{I \in A(R)^*|\,\, I\subseteq {\rm Nil}(R)\}$. By Lemmas \ref{lem1} and \ref{lem2}, every vertex $J_1\times\cdots\times J_n \in \mathcal{A}$ is adjacent to all  vertices of $\mathcal{EG}(R)$ and  $J_i\neq
R_i$, for every $i$. By an easy calculation,
$\Delta(\mathcal{EG}(R))=(t+1)^n-3$ and the number of vertices with maximum
degree is $t^n-1$. For every vertex $u$ of
$\mathcal{EG}(R)$ of maximum degree, choose
$v=R_1\times\cdots\times R_{n-1}\times (0)$. Then
$deg(v)=(t+1)(t^{n-1})-1=t^n+t^{n-1}-1$.
Thus
$$\Delta(\mathcal{EG}(R))-deg(v)+2=(t+1)^n-t^n-t^{n-1}.$$
Now, if $$(t+1)^n-2t^n-t^{n-1}+1>0, ~~~{\rm (I})$$  then by Lemma \ref{firstkindgraph}, $\mathcal{EG}(R)$ is Class $1$.
Since $t$ is constant, (I) is always hold, for enough large $n$.
}
\end{proof}

The following table gives suitable $n$'s for each $t$.

\begin{center}
\begin{tabular}{|c|c|c}
\hline
$t$ & suitable $n$ \\
\hline
$t=2$ & $n\geq 3$\\
\hline
$t=4$ & $n\geq 4$ \\
\hline
$t=6$ & $n\geq 6$ \\
\hline
$t=8$ & $n\geq 7$ \\
\hline
. & . \\
\hline
. & . \\
\hline
. & . \\
\hline
$t=44$ & $n\geq 32$ \\
\hline
. & . \\
\hline
\end{tabular}
\end{center}

Theorem \ref{clas1} fails if $n$ is not large enough, see the next example.

\begin{example}
Let $R=\mathbb{Z}_{9}\times\mathbb{Z}_{25}$. Then $t=2$ and $n=2$. We have that $|E(\mathcal{EG}(R))|=19$, $|V(\mathcal{EG}(R))|=7$ and $|\Delta(\mathcal{EG}(R))|=6$. Then $\mathcal{EG}(R)$ is overfull and so it is Class $2$.
\end{example}

%

We close this paper with the following result which recognizes Class $2$ essential annihilating-ideal graphs, when Artinian ring $R$ has exactly two maximal ideals and $|V(\mathcal{EG}(R))|$ is odd. However, the  general case-$|{\rm Max}(R)|=n\geq 3$ and odd $|V(\mathcal{EG}(R))|$-becomes very complicated and it remains still open.
\begin{thm}
Suppose that $R=R_1\times R_2$, where $R_i$ is an Artinian local ring with $t_i\geq 2$ proper ideals and $t_i$ is even for $i=1, 2$. Then $\mathcal{EG}(R)$ is Class $2$ if and only if $t_1=t_2=2$.
\end{thm}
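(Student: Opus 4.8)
My plan is to determine the isomorphism type of $\mathcal{EG}(R)$ exactly, and then decide its class by an overfullness computation together with the known classification of the chromatic index of complete multipartite graphs. First I would pin down the structure. Write $\mathfrak{m}_i$ for the maximal ideal of $R_i$; since $R_i$ is Artinian local with $t_i\geq 2$ proper ideals it is not a field, $\mathrm{Nil}(R_i)=\mathfrak{m}_i$, and every proper nonzero ideal of $R_i$ lies in $\mathfrak{m}_i=\mathrm{Nil}(R_i)$, hence is nilpotent by Lemma \ref{lem1} ($R_i$ being Noetherian), and therefore has nonzero annihilator. Thus $V(\mathcal{EG}(R))$ consists of all ideals $I_1\times I_2$ of $R=R_1\times R_2$ except $(0)$ and $R$, so $n:=|V(\mathcal{EG}(R))|=(t_1+1)(t_2+1)-2$. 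Because $R$ is Artinian, Lemmas \ref{lem1} and \ref{lem2} give that $I_1\times I_2$ and $J_1\times J_2$ are adjacent if and only if $(I_1J_1)\times(I_2J_2)\subseteq\mathrm{Nil}(R)=\mathfrak{m}_1\times\mathfrak{m}_2$, and in the local ring $R_i$ one has $I_iJ_i\subseteq\mathfrak{m}_i$ unless $I_i=J_i=R_i$. Hence two distinct vertices fail to be adjacent exactly when they coincide and equal $R_1$ in the first coordinate, or coincide and equal $R_2$ in the second; this relation is an equivalence on $V(\mathcal{EG}(R))$ whose classes are $A=\{R_1\times I_2: I_2\neq R_2\}$ of size $t_2$, $B=\{I_1\times R_2: I_1\neq R_1\}$ of size $t_1$, and the $m:=t_1t_2-1$ singletons $\{I_1\times I_2\}$ with $I_1\neq R_1,\ I_2\neq R_2$. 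Therefore $\mathcal{EG}(R)$ is the complete multipartite graph with $m$ parts of size $1$ and two further parts of sizes $t_1,t_2$, equivalently $K_m\vee K_{t_1,t_2}$; in particular $\Delta(\mathcal{EG}(R))=n-1$, $|E(\mathcal{EG}(R))|=\binom{n}{2}-\binom{t_1}{2}-\binom{t_2}{2}$, and $n$ is odd since both $t_1+1$ and $t_2+1$ are odd.

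Next I would compute overfullness. Since $n$ is odd, $\lfloor n/2\rfloor\,\Delta(\mathcal{EG}(R))=\tfrac{(n-1)^2}{2}$; using $n-1=(t_1+1)(t_2+1)-3$ and $\binom{n}{2}-\tfrac{(n-1)^2}{2}=\tfrac{n-1}{2}$, a routine simplification gives
\[
\Bigl\lfloor\tfrac{n}{2}\Bigr\rfloor\Delta(\mathcal{EG}(R))-|E(\mathcal{EG}(R))|=\tfrac14\bigl((t_1-t_2)^2+(t_1-2)^2+(t_2-2)^2-4\bigr),
\]
so $\mathcal{EG}(R)$ is overfull if and only if $(t_1-t_2)^2+(t_1-2)^2+(t_2-2)^2<4$. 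For even integers $t_1,t_2\geq 2$ each of these three squares is $0$ or at least $4$; they all vanish precisely when $t_1=t_2=2$, while otherwise $(t_1-t_2)^2\geq 4$ if exactly one of $t_1,t_2$ exceeds $2$, and $(t_1-2)^2\geq 4$ if both do. Hence $\mathcal{EG}(R)$ is overfull if and only if $t_1=t_2=2$.

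To finish: if $t_1=t_2=2$ then $\mathcal{EG}(R)$ is overfull and therefore of Class $2$ (in a $\Delta$-edge-colouring every colour class is a matching of at most $\lfloor n/2\rfloor$ edges, so $|E|\leq\lfloor n/2\rfloor\Delta$). If instead $\{t_1,t_2\}\neq\{2,2\}$, then $\mathcal{EG}(R)$ is not overfull, and since a complete multipartite graph is of Class $2$ exactly when it is overfull, $\mathcal{EG}(R)$ is of Class $1$. One may also avoid citing that classification: as $\Delta(\mathcal{EG}(R))=n-1\geq|V(\mathcal{EG}(R))|-3$, the Overfull Conjecture holds in this range, so a Class $2$ graph would contain an overfull subgraph $H$ with $\Delta(H)=n-1$; such an $H$ has a vertex of degree $n-1$, hence spans all $n$ vertices of $\mathcal{EG}(R)$, giving $|E(H)|\leq|E(\mathcal{EG}(R))|\leq\lfloor n/2\rfloor\Delta(\mathcal{EG}(R))=\lfloor|V(H)|/2\rfloor\Delta(H)$, contradicting that $H$ is overfull.

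I expect the main obstacle to be the structural identification in the first step: correctly reducing adjacency in $\mathcal{EG}(R)$, via Lemmas \ref{lem1} and \ref{lem2}, to the coordinatewise condition ``neither factor fills $R_i$'', and thereby recognising $\mathcal{EG}(R)$ as $K_m\vee K_{t_1,t_2}$. One subtlety worth flagging is that the elementary sufficient condition of Lemma \ref{firstkindgraph} does \emph{not} settle the Class $1$ direction here: the number $m=t_1t_2-1$ of maximum-degree vertices of $\mathcal{EG}(R)$ is at least $\Delta(\mathcal{EG}(R))-\deg(v)+2=\max\{t_1,t_2\}+1$ for every edge $\{u,v\}$ at a maximum-degree vertex $u$, so that lemma's hypothesis never holds. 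This is precisely why a heavier edge-colouring result (the classification of complete multipartite graphs, or the Overfull Conjecture in the range $\Delta\geq|V|-3$) must be invoked for that direction.
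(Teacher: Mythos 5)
Your proposal is correct, and its skeleton coincides with the paper's: both arguments reduce the theorem to the claim that $\mathcal{EG}(R)$ is overfull if and only if $t_1=t_2=2$, and your deficiency computation $\lfloor n/2\rfloor\Delta-|E|=\frac14\bigl((t_1-t_2)^2+(t_1-2)^2+(t_2-2)^2-4\bigr)$ agrees with (and is a tidier closed form of) the paper's degree-sum inequality, using the same degree data $|\mathcal{A}|=t_1t_2-1$, $\deg=n-1$ on $\mathcal{A}$, and $\deg=n-t_2$, $n-t_1$ on the two remaining classes. Where you genuinely diverge is in the theorem used to convert ``not overfull'' into ``Class $1$''. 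The paper observes that ${\rm Nil}(R)$ is a universal vertex and quotes Plantholt's theorem on graphs with a spanning star, which says exactly that such a graph is Class $2$ if and only if it is overfull; this is the lightest tool that applies and is already in the paper's bibliography. You instead identify $\mathcal{EG}(R)$ precisely as the complete multipartite graph $K_{t_1t_2-1}\vee K_{t_1,t_2}$ and invoke either the Hoffman--Rodger classification of chromatic indices of complete multipartite graphs or the Chetwynd--Hilton verification of the Overfull Conjecture for $\Delta\geq |V|-3$; both are correct and both cover this situation, but they are heavier external results than Plantholt's, and the universal vertex you yourself exhibit would have let you quote the simpler spanning-star criterion directly. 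What your route buys is the explicit structural description of $\mathcal{EG}(R)$ (which the paper leaves implicit in its degree counts) and the clean sum-of-squares form of the overfullness condition, which makes the equivalence with $t_1=t_2=2$ transparent for even $t_i$; your closing observation that Lemma \ref{firstkindgraph} can never certify the Class $1$ direction here (its threshold $\max\{t_1,t_2\}+1$ never exceeds the $t_1t_2-1$ maximum-degree vertices) is accurate and explains why some overfull-type theorem is unavoidable in this case.
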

\begin{proof}{
Since ${\rm Nil}(R)$ is adjacent to every other vertex, by \cite[Theorem p. 46]{plant} it is enough to show that ``$\mathcal{EG}(R)$ is overfull if and only if $t_1=t_2=2$". Let

$\mathcal{A}=\{I\in A(R)^*|\,\, I\subseteq {\rm Nil}(R)\}$,

$\mathcal{B}_1=\{(R_1,I_2)\in A(R)^*|\,\, I_2\neq R_2\}$ and

$\mathcal{B}_2=\{(I_1,R_2)\in A(R)^*|\,\, I_1\neq R_1\}$.

 One may show that $|\mathcal{A}|=t_1t_2-1$ and $deg(u)=\Delta(\mathcal{EG}(R))=t_1t_2+t_1+t_2-2$, for each vertex $u\in \mathcal{A}$. Moreover, $|\mathcal{B}_1|=t_2$, $deg(u)=t_1t_2+t_1-1$, for each vertex $u\in \mathcal{B}_1$ and $|\mathcal{B}_2|=t_1$, $deg(u)=t_1t_2+t_2-1$, for each vertex $u\in \mathcal{B}_2$. Thus $\mathcal{EG}(R)$ is overfull if and only if $$|E(\mathcal{EG}(R))|>\lfloor\frac{|V(\mathcal{EG}(R))|}{2}\rfloor\Delta(\mathcal{EG}(R))$$ if and only if $$\frac{(t_1t_2-1)(t_1t_2+t_1+t_2-2)+t_2(t_1t_2+t_1-1)+t_1(t_1t_2+t_2-1)}{2}>$$
 $$\lfloor\frac{t_1t_2+t_1+t_2-1}{2}\rfloor(t_1t_2+t_1+t_2-2)$$ if and only if $$t_1=t_2=2.$$
}
\end{proof}



{}

\end{document}